\newcommand{\mres}{\mathbin{\vrule height 1.6ex depth 0pt width 0.13ex\vrule height 0.13ex depth 0pt width 0.8ex}}
\theoremstyle{plain}
\newtheorem{thm}{Theorem}[section]
\newtheorem{prop}[thm]{Proposition}
\newtheorem{lem}[thm]{Lemma}
\newtheorem*{thm*}{Theorem}
\newtheorem*{prop*}{Proposition}
\newtheorem*{lem*}{Lemma}
\theoremstyle{definition}
\newtheorem{defi}[thm]{Definition}
\newtheorem*{defi*}{Definition}
\theoremstyle{remark}
\newtheorem{rmk}[thm]{Remark}
\newtheorem*{rmk*}{Remark}
\newcommand{\R}{\mathbf{R}}
\newcommand{\Z}{\mathbf{Z}}
\newcommand{\N}{\mathbf{N}}
\newcommand{\spt}{\mathrm{spt}}
\newcommand{\HH}{\mathcal{H}}
\newcommand{\LL}{\mathcal{L}}
\newcommand{\II}{\mathcal{I}}
\newcommand{\IL}{\Lambda}
\title{Solutions of the (free boundary) Reifenberg Plateau problem}
\author{Camille Labourie}
\date{}
\begin{document}

\maketitle

\begin{abstract}
    We solve two variants of the Reifenberg problem for all coefficient groups. We carry out the direct method of the calculus of variation and search a solution as a weak limit of a minimizing sequence. This strategy has been introduced by De Lellis, De Philippis, De Rosa, Ghiraldin and Maggi in \cite{I1},\cite{I2},\cite{I4},\cite{I5} and allowed them to solve the Reifenberg problem. We use an analogous strategy proved in \cite{Labourie} which allows to take into account the free boundary. Moreover, we show that the Reifenberg class is closed under weak convergence without restriction on the coefficient group.
\end{abstract}

\tableofcontents

\section{Definitions and main results}
\subsection{Introduction}
We present the Reifenberg approach to the Plateau problem \cite{Rei} (1960). We fix a $(d-1)$-dimensional compact boundary $\Gamma \subset \R^n$ (the \emph{boundary}). The Reifenberg competitors are the compact sets of $\R^n$ which contain and span $\Gamma$ in the sense of algebraic topology. 
\begin{defi*}[Reifenberg competitors]
    Fix a subgroup $L$ of the homology group $H_{d-1}(\Gamma)$. A Reifenberg competitor is a compact subset $E \subset \R^n$ such that $E$ contains $\Gamma$ and the morphism induced by inclusion
    \begin{equation*}
        \begin{tikzcd}
            H_{d-1}(\Gamma) \arrow[r]   &   H_{d-1}(E)
        \end{tikzcd}
    \end{equation*}
    is zero on $L$.
\end{defi*}
Here the homology theory is the \v{C}ech homology theory with a compact abelian coefficient group. Reifenberg minimizes the (spherical) Hausdorff measure of dimension $d$. He searches a minimizer as a limit of a minimizing sequence in Hausdorff distance. The continuity property of the \v{C}ech theory ensures that such limit is a competitor. However, the area is not lower semicontinuous with respect to convergence in Hausdorff distance. One can imagine a minimizing sequence which has more and more dense tentacles so that the limit set is too large. Reifenberg works with a compact abelian coefficient group so as to benefit from the Exactness Axiom and to be able to cut out the tentacles and patch the holes. His construction leads to an alternative minimizing sequence for which the area is lower semicontinuous.

In \cite{A1} (1968), Almgren solves a variant of the Reifenberg problem where the area is the integral of a \emph{(bounded) elliptic integrand} and the coefficient group is a finitely generated abelian group. His competitors only span a single given cycle of the boundary. Almgren relies on the theory of currents, flat chains and integral varifolds instead of the arguments of Reifenberg.

\begin{figure}[b]\label{fig}
    \centering
    \begin{minipage}{0.46\linewidth}
        \centering
        \includegraphics[width=0.8\linewidth]{./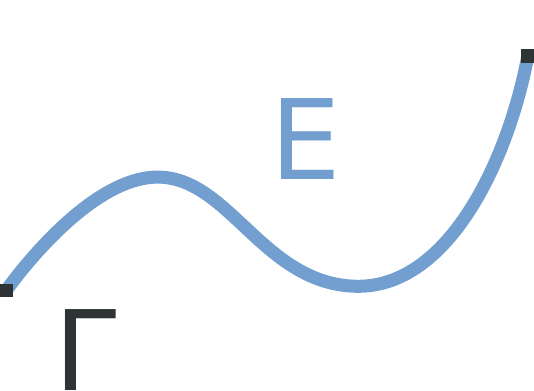}
        \caption{Competitors having a fixed intersection with $\Gamma$.}
    \end{minipage}
    \qquad
    \begin{minipage}{0.46\linewidth}
        \centering
        \includegraphics[width=0.8\linewidth]{./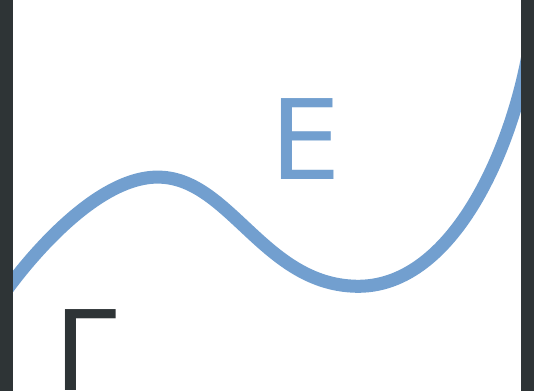}
        \caption{Competitors having a free intersection with $\Gamma$. The intersection $E \cap \Gamma$ may not be $\HH^d$ negligible. One can minimize $\HH^d(E \setminus \Gamma)$ or the whole set $\HH^d(E)$.}
    \end{minipage}
\end{figure}
In \cite{Nakauchi} (1984), Nakauchi solves a free boundary variant of the problem where the area is the Hausdorff measure of dimension $d$ and the coefficient group is a compact abelian group. What we mean by "free boundary" is that $\Gamma$ is not necessarily $(d-1)$-dimensional and that the intersection $E \cap \Gamma$ can vary among the competitors $E$.
\begin{defi*}[Nakauchi competitors]
    Fix a subgroup $L$ of the homology group $H_{d-1}(\Gamma)$. The Nakauchi competitors are the compacts sets $E \subset \R^n$ such that for all $v \in L$, there exists $u \in H_{d-1}(E \cap \Gamma)$ such that $i_*(u) = v$ and $i'_*(u) = 0$ where $i_*$ and $i'_*$ are the morphisms induced by inclusions:
    \begin{equation*}
        \begin{tikzcd}
            \strut                                                      &   H_{d-1}(\Gamma)\\
            H_{d-1}(E \cap \Gamma) \arrow[ur,"i_*"] \arrow [dr,"i'_*"]\\
            \strut                                                      &   H_{d-1}(E).
        \end{tikzcd}
    \end{equation*}
\end{defi*}
Nakauchi minimizes $\HH^d(E \setminus \Gamma)$ but it would be also interesting to minimize the whole set $\HH^d(E)$ so as take into account the free part $E \cap \Gamma$.

In \cite{DePauw} (2007), De Pauw solves simultaneously the Reifenberg and the Federer--Fleming problems in $\R^3$. In the Reifenberg formulation, the area is the Hausdorff measure of dimension $2$ and the coefficient group is $\Z$. De Pauw proves a natural equivalence between the homology of integral currents and the \v{C}ech homology on locally connected sets. He then builds a minimizing sequence of the Federer--Fleming problem whose supports are a minimizing sequence of the Reifenberg problem. This approach allows to exploit the properties of the currents sequence.

In \cite{Fang} (2015), Fang gives a new solution to the Reifenberg and Nakauchi problems (the free part $E \cap \Gamma$ is not taken into account). The area is the integral of an elliptic integrand and the coefficient group is an arbitrary abelian group. His proof takes advantage of the lower semicontinuity of the area on quasiminimal sets. Thanks to a construction of Feuvrier \cite{Feuv}), Fang obtains an alternative minimizing sequence composed of such sets. In \cite{FK} (2018), Fang and Kolasiński obtain the same existence result by following Almgren’s original ideas.

In \cite{I1} and \cite{I2} (2014), De Lellis, De Philippis, De Rosa, Ghiraldin and Maggi introduce a new direct method based on the \emph{weak convergence} of minimizing sequences in $\R^n \setminus \Gamma$. The area is given by the Hausdorff measure of dimension $d$ in the two first articles, and then by the integral of an elliptic integrand in \cite{I4}, \cite{I5}. The authors also apply this technique to solve the problem of Reifenberg but $E \cap \Gamma$ is not taken into account and a compact abelian coefficient group is required. In \cite{Labourie}, this notion of weak limit is generalized to quasiminimizing sequences in any ambiant space (even containing $\Gamma$). Thus, we can follow this strategy and take into account the free boundary.

\subsection{Reifenberg competitors}
\textbf{Notation} We fix an integer $n \geq 1$, an integer $d$ such that $1 \leq d \leq n$ and a closed set $\Gamma$ of $\R^n$ (the boundary). Given a topological space $S$, an integer $k$ and an abelian group $G$, we denote by $H_k(S;G)$ the $k$\textsuperscript{th} \v{C}ech homology group of $S$ over $G$. We abbreviate this notation as $H_k(S)$ because the coefficient group is not significant in this paper. We fix a subgroup $L$ of $H_{d-1}(\Gamma)$.

We work with a definition of Reifenberg competitors which allows free boundaries as in Fig. \ref{fig}.
\begin{defi}[Reifenberg competitor]\label{defi_reifenberg}
    A Reifenberg competitor is a compact subset $E \subset \R^n$ such that the morphism induced by inclusion
    \begin{equation}
        \begin{tikzcd}
            H_{d-1}(\Gamma) \arrow[r]   &   H_{d-1}(E \cup \Gamma)
        \end{tikzcd}
    \end{equation}
    is zero on $L$.
\end{defi}

Our goal is to prove the two following results. We omit the regularity of the boundary here (see Theorems \ref{rei_sol1} and \ref{rei_sol2} for the full statement). \emph{Admissible energies} are precised in Definition \ref{defi_energy}.
\begin{thm*}[with the free boundary]
    Let $\II$ be an admissible energy. We assume that 
    \begin{equation}
        m := \inf \set{\II(E) | E \ \text{Reifenberg competitor}} < \infty
    \end{equation}
    and that there exists a compact set $C \subset \R^n$ such that
    \begin{equation}
        m = \inf \set{\II(E) | E \ \text{Reifenberg competitor},\ E \subset C}.
    \end{equation}
    Then there exists a Reifenberg competitor $E \subset C$ such that $\II(E) = m$.
\end{thm*}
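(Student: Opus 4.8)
The plan is to carry out the direct method, producing the minimizer as a weak limit in the sense of \cite{Labourie}. First I fix a minimizing sequence $(E_k)$ of Reifenberg competitors with $E_k \subset C$ and $\II(E_k) \to m$. Such a sequence is automatically quasiminimizing with gauge tending to $0$: since every Reifenberg competitor has energy at least $m$, no competitor-preserving deformation $\phi$ can push the energy below $m$, so $\II(E_k) \leq \II(\phi(E_k)) + (\II(E_k) - m)$ with $\II(E_k) - m \to 0$. This places us inside the scope of the weak-limit construction of \cite{Labourie}: after passing to a subsequence, $(E_k)$ admits a weak limit $E$, a compact $(\HH^d,d)$-rectifiable quasiminimal set with $E \subset C$, together with its limit measure $\mu$, the weak-$*$ limit of $\HH^d \mres E_k$, which is carried by $E \cup \Gamma$ and dominates $\HH^d \mres E$.

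The second ingredient is lower semicontinuity, which is built into the notion of admissible energy (Definition \ref{defi_energy}): the functional is lower semicontinuous along the weak convergence of \cite{Labourie}, so $\II(E) \leq \liminf_k \II(E_k) = m$. Thus $E \subset C$ is an energy-minimizing candidate, and it only remains to verify the homological spanning constraint; once $E$ is known to be a Reifenberg competitor, the definition of $m$ forces $\II(E) \geq m$, and hence $\II(E) = m$.

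The heart of the proof, and the step I expect to be the main obstacle, is to show that the weak limit is a Reifenberg competitor, i.e. that the inclusion-induced map $H_{d-1}(\Gamma) \to H_{d-1}(E \cup \Gamma)$ vanishes on $L$. The classical difficulty is that one cannot appeal to the Exactness Axiom or to the continuity of \v{C}ech homology, both of which require a compact coefficient group and which underlie Reifenberg's and Nakauchi's arguments; here the coefficient group is arbitrary. The strategy I would follow is purely functorial and therefore insensitive to the coefficient group. The idea is to construct, for large $k$, a continuous map $g_k : E_k \cup \Gamma \to E \cup \Gamma$ that restricts to the identity on $\Gamma$. Granting such a map, the factorization of the inclusion $\Gamma \hookrightarrow E \cup \Gamma$ as $g_k$ composed with $\Gamma \hookrightarrow E_k \cup \Gamma$ yields, on homology, the equality of maps $(H_{d-1}(\Gamma) \to H_{d-1}(E\cup\Gamma)) = (g_k)_* \circ (H_{d-1}(\Gamma) \to H_{d-1}(E_k \cup \Gamma))$; since the second factor already kills $L$, so does the composite, for every coefficient group at once.

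Constructing $g_k$ is where the genuine work lies. I would write it as a composition of two maps that both fix $\Gamma$: first a deformation that pushes the thin tentacles of $E_k$ — the portion of $E_k$ lying outside a fixed neighborhood $V$ of $E \cup \Gamma$, whose $\HH^d$-mass tends to $0$ by the weak convergence — back into $V$; and then a retraction $V \to E \cup \Gamma$, whose existence I would extract from the geometric regularity (Ahlfors regularity and uniform rectifiability) of the quasiminimal weak limit furnished by \cite{Labourie}. The delicate points are that the first map must be controlled despite the tentacles possibly carrying nontrivial topology, and that the retraction must be available on a genuine neighborhood of the limit; together they constitute the closedness of the Reifenberg class under weak convergence for arbitrary coefficient groups. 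With $E$ established as a competitor inside $C$, the lower semicontinuity bound $\II(E) \leq m$ and the minimality bound $\II(E) \geq m$ combine to give $\II(E) = m$, completing the proof.
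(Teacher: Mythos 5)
Your overall skeleton agrees with the paper: apply the weak-limit direct method of \cite{Labourie} (Proposition \ref{cor_direct}) to a minimizing sequence, obtain $\II(E_\infty) \leq m$ by lower semicontinuity, localize $E_\infty$ inside $C$, and reduce everything to showing that the weak limit is still a Reifenberg competitor. The quasiminimality observation (which rests on the class being closed under sliding deformations, Lemma \ref{rei_image}) is also as in the paper. The problem is the step you yourself flag as the heart of the matter.

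Your plan for closedness under weak convergence is to build a continuous map $g_k\colon E_k \cup \Gamma \to E_\infty \cup \Gamma$ fixing $\Gamma$, by first pushing the low-mass tentacles of $E_k$ into a neighborhood $V$ of $E_\infty \cup \Gamma$ and then retracting $V$ onto $E_\infty \cup \Gamma$. The second half is a genuine gap: a coral minimal set is Ahlfors regular and uniformly rectifiable, but these are measure-theoretic properties and do not make $E_\infty \cup \Gamma$ a neighborhood retract --- that would require local contractibility (an ANR-type property), which is not known for minimal sets in general dimension and codimension and is not supplied by \cite{Labourie}. The first half is also problematic: small $\HH^d$-mass outside $V$ does not let you deform the tentacles into $V$; a Federer--Fleming projection collapses them onto a $(d-1)$-dimensional grid skeleton, which still lies outside $V$ and cannot in general be mapped continuously into $E_\infty \cup \Gamma$. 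The paper's Lemma \ref{rei_limit} circumvents the retraction entirely by arguing at the level of \v{C}ech homology: for every open covering $\alpha$ of $E_\infty \cup \Gamma$ it constructs a refinement $\gamma$ that also covers some $E_k \cup \Gamma$ and whose nerves $K(E_\infty\cup\Gamma)$ and $K(E_k\cup\Gamma)$ have the same $d$-simplexes (using spheres in general position, the Federer--Fleming projection onto a $(d-1)$-skeleton, and one extra open set $\beta_\infty$ absorbing that skeleton without creating new $d$-simplexes). The induced map on $(d-1)$-dimensional nerve homology is then injective, so the vanishing on $L$ for $E_k$ descends to the nerve of $E_\infty \cup \Gamma$, and passing to the inverse limit over coverings gives the conclusion for an arbitrary coefficient group. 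To complete your argument you would need either such a nerve-level statement or a genuinely new reason why $E_\infty \cup \Gamma$ retracts a neighborhood; as written, the retraction does not exist.
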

The next theorem is equivalent to \cite[Theorem 1.3]{Fang} (which is based on Feuvrier's construction) and \cite[Theorem 3.4]{I5} (which is based on weak convergence of minimizing sequences).
\begin{thm*}[without the free boundary]
    Let $\II$ be an admissible energy. We assume that 
    \begin{equation}
        m := \inf \set{\II(E \setminus \Gamma) | E \ \text{Reifenberg competitor}} < \infty
    \end{equation}
    and that there exists a compact set $C \subset \R^n$ such that
    \begin{equation}
        m = \inf \set{\II(E \setminus \Gamma) | E \ \text{Reifenberg competitor},\ E \subset C}.
    \end{equation}
    Then there exists a Reifenberg competitor $E \subset C$ such that $\II(E \setminus \Gamma) = m$.
\end{thm*}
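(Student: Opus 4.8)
The plan is to run the direct method of the calculus of variations, taking as candidate minimizer the weak limit of a minimizing sequence in the sense of \cite{Labourie}. First I would fix a minimizing sequence $(E_k)$ of Reifenberg competitors with $E_k \subset C$ and $\II(E_k \setminus \Gamma) \to m$. Since the area is not lower semicontinuous under Hausdorff convergence, a naive limit may be too large, so I would first upgrade $(E_k)$ to a \emph{quasiminimizing} sequence of competitors with the same infimum. What makes this possible is that the class of Definition \ref{defi_reifenberg} is stable under Lipschitz deformations fixing $\Gamma$: if $\GT$ is such a map and $E$ is a competitor, the morphism $H_{d-1}(\Gamma) \to H_{d-1}(\GT(E) \cup \Gamma)$ still vanishes on $L$, since it factors through $H_{d-1}(E \cup \Gamma)$. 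This deformation-closedness lets me insert a Feuvrier-type construction \cite{Feuv} (or invoke directly the framework of \cite{Labourie}) to obtain competitors that are quasiminimal in $\R^n \setminus \Gamma$ while increasing $\II(E_k \setminus \Gamma)$ by a vanishing amount.

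Next I would pass to a common subsequence along which both the Hausdorff limit $E_\infty$ of the compacts $E_k$ (Blaschke selection) and the weak-$*$ limit $\mu$ of the energy measures of $E_k \setminus \Gamma$ exist, both supported in $C$. Quasiminimality supplies a uniform lower Ahlfors bound $\HH^d(E_k \cap B(x,r)) \ge c\,r^d$ for $x \in E_k \setminus \Gamma$, and the central technical point is that this bound survives in the limit: it forces $\spt \mu \setminus \Gamma = E_\infty \setminus \Gamma$, so that no topologically relevant piece of the $E_k$ can evaporate. The rectifiability/structure theorem of \cite{Labourie} then yields $\mu = \theta\,\HH^d \mres E_\infty$ with $E_\infty$ being $d$-rectifiable and the density $\theta$ bounded below by the integrand of $\II$, whence
\begin{equation*}
    \II(E_\infty \setminus \Gamma) \le \mu(\R^n \setminus \Gamma) \le \liminf_k \II(E_k \setminus \Gamma) = m .
\end{equation*}
This is precisely the lower semicontinuity that the weak-convergence scheme is designed to produce.

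It then remains to check that $E_\infty$ is itself a Reifenberg competitor, and this closedness of the Reifenberg class is the step I expect to be the main obstacle. When the coefficient group is compact the argument is soft: the continuity axiom of \v{C}ech homology identifies $H_{d-1}(E_\infty \cup \Gamma)$ with the inverse limit of the $H_{d-1}(U)$ over neighborhoods $U \supset E_\infty \cup \Gamma$; since $E_k \cup \Gamma \subset U$ for $k$ large, the vanishing of $H_{d-1}(\Gamma) \to H_{d-1}(E_k \cup \Gamma)$ on $L$ factors through $H_{d-1}(U)$, so every component of the image of $L$ in the inverse limit is zero and $E_\infty \cup \Gamma$ spans. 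For an \emph{arbitrary} coefficient group, however, \v{C}ech homology need not be continuous — the natural map into the inverse limit can fail to be injective because of a $\varprojlim^1$ obstruction — and this is exactly the gap that has forced previous authors to restrict to compact groups. The hard part of the whole proof is therefore a closedness lemma that sidesteps continuity of homology: I would realize the vanishing of $L$ concretely by compatible fillings in the $E_k \cup \Gamma$ and push them to the limit set, using the coincidence $\spt\mu \setminus \Gamma = E_\infty \setminus \Gamma$ to guarantee that the set produced by the topological limit is the very set carrying the minimal energy.

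Finally I would conclude: $E := E_\infty$ is a Reifenberg competitor with $E \subset C$, so the definition of $m$ gives $\II(E \setminus \Gamma) \ge m$, while the display above gives $\II(E \setminus \Gamma) \le m$. Hence $\II(E \setminus \Gamma) = m$, which produces the desired minimizer.
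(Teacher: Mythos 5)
Your skeleton is the right one and matches the paper's: work in $X=\R^n\setminus\Gamma$, run the direct method on the class $\{E\setminus\Gamma : E \text{ a Reifenberg competitor}\}$ (which is deformation-closed by the factorisation through $H_{d-1}(E\cup\Gamma)$ exactly as you say), extract a weak limit $\II\mres(E_k\setminus\Gamma)\rightharpoonup\II\mres S_\infty$ with $\II(S_\infty)\le m$ and $S_\infty\subset C$, and then show that $E_\infty=(S_\infty\cup\Gamma)\cap C$ is again a competitor. All of your second paragraph (quasiminimal replacement, lower Ahlfors bounds, rectifiability, lower semicontinuity) is packaged in the paper into a single citation, Proposition \ref{cor_direct} from \cite{Labourie}, so no issue there. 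The problem is that the step you yourself flag as ``the main obstacle'' --- closedness of the Reifenberg class under weak convergence for an \emph{arbitrary} coefficient group --- is the actual mathematical content of the theorem (Lemma \ref{rei_limit}), and your proposed resolution, ``realize the vanishing of $L$ by compatible fillings in the $E_k\cup\Gamma$ and push them to the limit set,'' is a restatement of the goal rather than an argument. Note also that the only information available at that stage is $\HH^d(E_k\setminus V)\to 0$ for every open $V\supset E_\infty\cup\Gamma$; the soft neighborhood/continuity argument you sketch for compact groups needs $E_k\cup\Gamma\subset V$ eventually, i.e.\ Hausdorff convergence of $E_k\cup\Gamma$ to $E_\infty\cup\Gamma$, which weak convergence does not give and which your proposal does not establish either (the classical obstruction for non-compact groups is in any case the failure of the exactness axiom needed for Reifenberg's surgery, not a $\varprojlim^1$ issue).

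What actually closes the gap in the paper is a concrete nerve-theoretic construction. Given any open covering $\alpha$ of $E_\infty\cup\Gamma$, one builds a refinement $\beta=(\beta_j)$ by finite unions of balls whose boundary spheres are in general position (Lemma \ref{spheres_positions}), so that every $d$-fold intersection $\bigcap_S\partial\beta_j$ lies in finitely many $(n-d)$-spheres; one then uses $\HH^d(E_k\cap\abs{L})\to 0$ to perform a Federer--Fleming projection $\phi$ of the residual piece of $E_k$ onto the $(d-1)$-skeleton of a grid disjoint from $E_\infty\cup\Gamma$, translates the grid so that this skeleton avoids all the $\bigcap_S\partial\beta_j$, and adjoins a single extra open set $\beta_\infty$ covering the projected residue while meeting no $d$-fold intersection of the $\beta_j$. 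The resulting covering $\gamma=\phi^{-1}(\beta)$ of $E_k\cup\Gamma$ has a nerve with exactly the same $d$-simplexes as the nerve of its trace on $E_\infty\cup\Gamma$; hence the induced map $j_{\gamma*}$ on $H_{d-1}$ of the nerves is injective, the vanishing of $L$ in $H_{d-1}(K(E_k\cup\Gamma))$ descends to $H_{d-1}(K(E_\infty\cup\Gamma))$, and one concludes from the inverse-limit definition of \v{C}ech homology over a cofinal family of such coverings --- using neither exactness nor continuity. Without this (or an equivalent) construction, your proof does not go through; everything else in your outline is correct but is either routine or already available from \cite{Labourie}.
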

\begin{rmk*}
    If $\Gamma$ is compact and $\II(\Gamma) < \infty$, this amounts to minimizing $\II(E)$ among Reifenberg competitors containing $\Gamma$.
\end{rmk*}

In the rest of this subsection, we compare Definition \ref{defi_reifenberg} with others definition of Reifenberg competitors. The next lemma shows that Definition \ref{defi_reifenberg} is equivalent to the original definition of Reifenberg when $\Gamma$ is regular enough and $L \ne 0$. It has been suggested by Ulrich Menne.
\begin{lem}
    Assume that $\Gamma$ is a compact connected $C^1$ manifold of dimension $d-1$ imbedded in $\R^n$. Let a compact subset $E \subset \R^n$ be such that the morphism induced by inclusion
    \begin{equation}
        \begin{tikzcd}
            H_{d-1}(\Gamma) \arrow[r]   &   H_{d-1}(E \cup \Gamma)
        \end{tikzcd}
    \end{equation}
is non injective. Then $\Gamma \subset E$.
\end{lem}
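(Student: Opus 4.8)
\emph{Plan of proof.} I would argue by contraposition: assuming $\Gamma\not\subset E$, I would show that the inclusion-induced map $j_*\colon H_{d-1}(\Gamma)\to H_{d-1}(E\cup\Gamma)$ is injective. Write $Y:=E\cup\Gamma$ and fix a point $x\in\Gamma\setminus E$. Since $E$ is closed and $\Gamma$ is a $C^1$ manifold, I can choose an open set $V\subset\R^n$ with $x\in V$, $\overline{V}\cap E=\emptyset$, and $V\cap\Gamma$ a $C^1$ coordinate disk centred at $x$; then $V\cap Y=V\cap\Gamma$. Inside this disk I pick an open sub-disk $U\ni x$ whose closure $\overline{U}$ is still contained in $V\cap\Gamma$. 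The whole point of this choice is that near $x$ the possibly very irregular set $Y$ coincides with the manifold $\Gamma$.

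The backbone of the argument is the commutative square induced by the inclusion of pairs $(\Gamma,\Gamma\setminus U)\hookrightarrow(Y,Y\setminus U)$:
\begin{equation*}
    \begin{tikzcd}
        H_{d-1}(\Gamma) \arrow[r,"j_*"] \arrow[d,"\ell"'] & H_{d-1}(Y) \arrow[d] \\
        H_{d-1}(\Gamma,\Gamma\setminus U) \arrow[r,"b"'] & H_{d-1}(Y,Y\setminus U).
    \end{tikzcd}
\end{equation*}
I would reduce the statement to two facts: that the left vertical map $\ell$ is injective, and that the bottom map $b$ is an isomorphism. Granting these, a diagram chase finishes the proof: if $\alpha\in\ker j_*$ then $b(\ell(\alpha))=0$, hence $\ell(\alpha)=0$ because $b$ is an isomorphism, hence $\alpha=0$ because $\ell$ is injective.

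For the injectivity of $\ell$ I would invoke the local structure of a closed connected manifold. Because $\Gamma$ is a compact $C^1$ manifold it is triangulable, so on $\Gamma$ and its open subsets \v{C}ech homology agrees with singular homology and I may argue with the classical theory. The standard fact is that for a connected closed $(d-1)$-manifold the map $H_{d-1}(\Gamma)\to H_{d-1}(\Gamma,\Gamma\setminus\{x\})$ is injective for every $x$: a class is detected by its value in the local homology at a single point, since the associated section of the $G$-orientation system is locally constant and $\Gamma$ is connected. This holds for an arbitrary coefficient group $G$ (in the non-orientable case $H_{d-1}(\Gamma)$ is the $2$-torsion of $G$, and the local value of a nonzero class is still nonzero, which is exactly where connectedness is indispensable). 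Factoring this map as $H_{d-1}(\Gamma)\xrightarrow{\ell} H_{d-1}(\Gamma,\Gamma\setminus U)\to H_{d-1}(\Gamma,\Gamma\setminus\{x\})$, the second arrow being the isomorphism for the disk $U$, yields the injectivity of $\ell$.

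The map $b$ I would obtain by excision: excising the closed set $Y\setminus V$ (whose closure avoids $\overline{U}$) identifies $H_{d-1}(Y,Y\setminus U)$ with $H_{d-1}(V\cap Y,(V\cap Y)\setminus U)$, and the same excision on $\Gamma$ identifies $H_{d-1}(\Gamma,\Gamma\setminus U)$ with $H_{d-1}(V\cap\Gamma,(V\cap\Gamma)\setminus U)$; since $V\cap Y=V\cap\Gamma$, the two coincide and $b$ is an isomorphism. The main obstacle is precisely this step: \v{C}ech homology with a general (non-compact) coefficient group need not satisfy all the Eilenberg--Steenrod axioms, so one must check which operations are legitimate. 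The saving feature is that it is the \emph{exactness} axiom that fails, whereas excision and continuity survive on compact Hausdorff pairs; moreover every relative group I actually compute lives on the manifold pair $(V\cap\Gamma,(V\cap\Gamma)\setminus U)$, an ANR pair on which \v{C}ech and singular homology coincide. Verifying these compatibilities carefully — rather than the underlying topology — is where the real work lies.
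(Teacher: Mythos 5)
Your proposal is correct and follows essentially the same route as the paper: contraposition, the same commutative square of pairs, injectivity of $H_{d-1}(\Gamma)\to H_{d-1}(\Gamma,\Gamma\setminus U)$ via the local homology of a closed connected manifold, and excision to show the bottom relative map is an isomorphism, with the same care in transferring the singular-homology statement to \v{C}ech homology on triangulable/compact pairs. The only differences are cosmetic: the paper takes $U$ to be the star of a vertex in a triangulation rather than a coordinate disk, and applies excision in a single step directly between the compact pairs $(\Gamma,\Gamma\setminus U)$ and $(E\cup\Gamma,(E\cup\Gamma)\setminus U)$, avoiding your non-compact intermediate pair $(V\cap Y,(V\cap Y)\setminus U)$.
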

\begin{proof}
    We proceed by contraposition and assume that there exists $x \in \Gamma \setminus E$. Let us introduce some preliminary objects. According to the Triangulation Theorem \cite[Chapter IV, Theorem 12A]{Whitney}, there exists a simplicial complex\footnote{We follow the formalism of \cite[Chapter II]{ES} where a simplicial complex $K$ is a finite collection of faces and $\abs{K}$ is the set of points which belong to simplexes of $K$.} $K$ and a homeomorphic map $\pi\colon \abs{K} \to \Gamma$. Let $p \in \abs{K}$ be the point such that $\pi(p) = x$. We recall that the \emph{star} of p,
    \begin{equation}
        \mathrm{St}(p) := \bigcup \set{\mathrm{int}(\sigma) | \sigma \in K,\ p \in \sigma},
    \end{equation}
    is an open set of $\abs{K}$. Moreover, $L: = \set{\sigma \in K | p \notin \sigma}$ is a subcomplex of $K$ such that $\abs{L} = \abs{K} \setminus \mathrm{St}(p)$. Since $\pi^{-1}(\Gamma \setminus E)$ is an open set containing $p$, we can replace $K$ by its barycentric subdivision until $\overline{\mathrm{St}(p)} \subset \pi^{-1}(\Gamma \setminus E)$. In conclusion, $U := \pi(\mathrm{St}(p))$ is a relative open set of $\Gamma$, $\overline{U} \subset \Gamma \setminus E$ and the pair $(\Gamma, \Gamma \setminus U)$ is triangulated by $(K,L)$.

    Now, we consider the commutative diagram
\begin{equation}
    \begin{tikzcd}
        H_{d-1}(\Gamma) \arrow[r] \arrow[d] &   H_{d-1}(\Gamma, \Gamma \setminus U) \arrow[d]\\
        H_{d-1}(E \cup \Gamma) \arrow[r]    &   H_{d-1}(E \cup \Gamma, (E \cup \Gamma) \setminus U)
    \end{tikzcd}
\end{equation}
where the arrows are induced by inclusions. We are going to show that 
\begin{equation}
    \begin{tikzcd}
        H_{d-1}(\Gamma) \arrow[r]   &   H_{d-1}(\Gamma, \Gamma \setminus U)
    \end{tikzcd}
\end{equation}
is injective and then that
\begin{equation}
    \begin{tikzcd}
        H_{d-1}(\Gamma, \Gamma \setminus U) \arrow[r]   &   H_{d-1}(E \cup \Gamma, (E \cup \Gamma) \setminus U)
    \end{tikzcd}
\end{equation}
is an isomorphism. It will follow that
\begin{equation}
    \begin{tikzcd}
        H_{d-1}(\Gamma) \arrow[r]   &   H_{d-1}(E \cup \Gamma)
    \end{tikzcd}
\end{equation}
is injective.

The set $\Gamma$ is a connected manifold so, according to \cite[Theorem 3.26]{Hatcher}, the morphism
\begin{equation}
    \begin{tikzcd}
        H_{d-1}(\Gamma) \arrow[r]   &   H_{d-1}(\Gamma, \Gamma \setminus x)
    \end{tikzcd}
\end{equation}
is injective with respect to the singular homology. The factorisation $\Gamma \subset (\Gamma, \Gamma \setminus U) \subset (\Gamma, \Gamma \setminus x)$ implies that
\begin{equation}\label{gamma_injectivity}
    \begin{tikzcd}
        H_{d-1}(\Gamma) \arrow[r]   &   H_{d-1}(\Gamma, \Gamma \setminus U)
    \end{tikzcd}
\end{equation}
is also injective with respect to the singular homology. Let us justify that this result can be passed on to the \v{C}ech homology. According to \cite[Chapter IX, Corollary 9.4]{ES}, the \v{C}ech homology is an (exact) homology theory on the category of triangulable pairs and their simplicial maps. Then, the uniqueness Theorem \cite[Chapter III, Theorem 10.1]{ES} implies that the \v{C}ech homology and the singular homology are isomorphic in a natural way on this category. As the pair $(\Gamma, \Gamma \setminus U)$ is triangulated, we conclude that the morphism (\ref{gamma_injectivity}) is also injective with respect to the \v{C}ech homology.

The second part relies on the Excision axiom and the fact that $\overline{U} \subset \Gamma \setminus E$. The covering
\begin{equation}
    E \cup \Gamma = (E \cup \Gamma \setminus \overline{U}) \cup (\Gamma \setminus E)
\end{equation}
is open in $E \cup \Gamma$ so $E \cup \Gamma$ is covered by the relative interiors of $(E \cup \Gamma) \setminus U$ and $\Gamma$. We can apply the Excision axiom \cite[Chapter IX, Theorem 6.1]{ES} to see that
\begin{equation}
    \begin{tikzcd}
        H_{d-1}(\Gamma, \Gamma \setminus U) \arrow[r]   &   H_{d-1}(E \cup \Gamma, (E \cup \Gamma) \setminus U)
    \end{tikzcd}
\end{equation}
is an isomorphism.
\end{proof}

Next, we compare Definition \ref{defi_reifenberg} to the definition of Nakauchi seen in introduction (or \cite[Definition 1]{Nakauchi}). Let $E$ be a compact subset of $\R^n$ and consider the commutative diagram
\begin{equation*}
    \begin{tikzcd}
        \strut                                                      &   H_{d-1}(\Gamma) \arrow[dr,"j_*"]    &   \\
        H_{d-1}(E \cap \Gamma) \arrow[ur,"i_*"] \arrow [dr,"i'_*"]  &                                       &   H_{d-1}(E \cup \Gamma).\\
        \strut                                                      &   H_{d-1}(E) \arrow[ur,"j'_*"]        &   
    \end{tikzcd}
\end{equation*}
Observe that $E$ satisfies Definition \ref{defi_reifenberg} if and only if all elements of the form $(v,0) \in L \otimes H_{d-1}(E)$ are in the kernel of $j_* - j'_*$. And $E$ is a Nakauchi competitor if and only if all elements of the form $(v,0) \in L \otimes H_{d-1}(E)$ are in the image of $(i_*,i'_*)$. Assuming that the Mayer Vietoris sequence holds for the sets $\Gamma$, $E$ in $E \cup \Gamma$, the following sequence is exact:
\begin{equation*}
    \begin{tikzcd}
        H_{d-1}(E \cap \Gamma) \arrow[r,"{(i_*,i'_*)}"] &   H_{d-1}(\Gamma) \otimes H_{d-1}(E) \arrow[r,"j_* - j'_*"]   &   H_{d-1}(E \cup \Gamma).
    \end{tikzcd}
\end{equation*}
Thus, the Mayer Vietoris sequence implies that Definition \ref{defi_reifenberg} is equivalent to the definition of Nakauchi. In that sense, we consider these definitions to be essentially equivalent. We favor Definition \ref{defi_reifenberg} because we can prove that it is closed under weak convergence without restriction on the coefficient group (see Lemma \ref{rei_limit}).

\subsection{Deformations and energies}
\textbf{Notation} Here the ambiant space is an open set $X$ of $\R^n$ and $\Gamma$ is a relatively closed set of $X$. The interval $[0,1]$ is denoted by the capital letter $I$. Given a set $E \subset X$ and a function $F\colon I \times E \to X$, the notation $F_t$ means $F(t,\cdot)$. Given two sets $A, B \subset \R^n$, the notation $A \subset \subset B$ means that there exists a compact set $K \subset \R^n$ such that $A \subset K \subset B$.

Sliding deformations have been introduced by David \cite[Definition 1.3]{Sliding} to study the \emph{restricted sets} of Almgren with an additional boundary constraint.
\begin{defi}[Sliding deformation along a boundary]
    Let $E$ be a closed, $\HH^d$ locally finite subset of $X$. A \emph{sliding deformation} of $E$ in an open set $U \subset X$ is a Lipschitz map $f\colon E \to X$ such that there exists a continuous homotopy $F\colon I \times E \to X$ satisfying the following conditions:
    \begin{subequations}
        \begin{align}
            &   F_0 = \mathrm{id}\\
            &   F_1 = f\\
            &   \forall t \in I,\ F_t(E \cap \Gamma) \subset \Gamma\\
            &   \forall t \in I,\ F_t(E \cap U) \subset U\\
            &   \forall t \in I,\ F_t = \mathrm{id} \ \text{in} \ E \setminus K,
        \end{align}
    \end{subequations}
    where $K$ is some compact subset of $E \cap U$. Alternatively, the last axiom can be stated as
    \begin{equation}
        \Set{x \in E | \exists \, t \in I, \ F_t(x) \ne x} \subset \subset E \cap U.
    \end{equation}
\end{defi}

The next definition comes from \cite[Definition 25.3 and Remark 25.87]{Sliding}. It is a slight generalization of \cite[Definition 1.6(2)]{A1} and \cite[Definition IV.1(7)]{A2}. After the statement, we will give a few explanations and compare it to other definitions.
\begin{defi}[Admissible energy]\label{defi_energy}
    An \emph{admissible energy} in $X$ is a Borel regular measure $\II$ in $X$ which satisfies the following axioms:
\begin{enumerate}
    \item There exists $\IL \geq 1$ such that $\IL^{-1} \HH^d \leq \II \leq \IL \HH^d$.
    \item Let $x \in X$, let a $d$-plane $V$ passing through $x$, let a $C^1$ map $f\colon V \to \mathbf{R}^n$ be such that $f(x) = x$ and $Df(x)$ is the inclusion map $\overrightarrow{V} \hookrightarrow \R^n$. Then
    \begin{equation}
        \lim_{r \to 0} \frac{\II(f(V \cap B(x,r)))}{\II(V \cap B(x,r))} = 1.
    \end{equation}
\item For each $x \in X$, there exists $R > 0$ and $\varepsilon \colon \mathopen{]}0,R\mathclose{]} \to \R^+$ such that $\overline{B}(x,R) \subset X$, $\lim_{r \to 0} \varepsilon(r) = 0$ and
    \begin{equation}\label{ellipticity_david}
        \II(V \cap B(x,r)) \leq \II(S \cap B(x,r)) + \varepsilon(r)r^d
    \end{equation}
    whenever $0 < r \leq R$, $V$ is a $d$-plane passing through $x$ and $S \subset \overline{B}(x,r)$ is a compact $\HH^d$ finite set which cannot be mapped into $V \cap \partial B(x,r)$ by a Lipschitz mapping $\psi\colon \overline{B}(x,r) \to \overline{B}(x,r)$ such that $\psi = \mathrm{id}$ on $V \cap \partial B(x,r)$.
    \end{enumerate}
\end{defi}
The third axiom is important to establish the lower semicontinuity. Let us say that $(E_k)$ is a minimizing sequence of competitors such that $\II \mres E_k \to \mu$ where $\mu$ is a $d$-rectifiable Radon measure. The third axiom is the main argument to show that
\begin{equation}
    \II \mres E_\infty \leq \mu,
\end{equation}
where $E_\infty = \spt(\mu)$. This yields in particular $\II(E_\infty) \leq \lim_k \II(E_k)$. The reader can find an example below Definition 25.3 in \cite{Sliding} where $\II(E_\infty)$ is too large.

Here is an example of admissible energy. We consider two Borel functions,
\begin{equation}
    i\colon X \times G(d,n) \to \mathopen{]}0,\infty\mathclose{[} \ \text{and} \ j\colon X \to \mathopen{]}0,\infty\mathclose{[}
\end{equation}
called the \emph{integrands} and we define the corresponding energy by the formula
\begin{equation}
    \II(S) = \int_{S_r} i(y,T_yS) \, \mathrm{d}\HH^d(y) + \int_{S_u} j(y) \, \mathrm{d}\HH^d(y)
\end{equation}
where $S \subset X$ is a Borel $\HH^d$ finite set and $S_r$, $S_u$ are its $d$-rectifiable and purely $d$-unrectifiable parts. We assume $\IL^{-1} \leq i,j \leq \IL$ so that $\IL^{-1} \HH^d \leq \II \leq \IL \HH^d$ on such sets $S$. We define of course $\II(S) = \infty$ on Borel sets $S \subset X$ of infinite $\HH^d$ measure and we extend $\II$ on all subsets of $X$ by Borel regularity. Next, we assume that $i$ is continuous on $X \times G(d,n)$ and we show that this implies the second axiom. Let $x$, $V$ and $f$ be as in (ii). To shorten a bit the notations, we assume $x = 0$ and we denote $i(0,V)$ by $i_0$. For $r > 0$, we denote $B(0,r)$ by $B_r$ and $f(V \cap B(0,r))$ by $S_r$. Fix $\varepsilon > 0$. According to the inverse function theorem, there exists $R > 0$ such that $f$ induces a $(1+\varepsilon)$-bilipschitz diffeomorphism from $V \cap B_R$ to $S = S_R$. Note that the function $y \mapsto (y,T_y S)$ is continuous on $S$ and that at $y = 0$, $(y,T_y S) = (0,V)$. As $i$ is continuous, there exists $R' > 0$ such that for all $y \in S \cap B_{R'}$
\begin{equation}
    (1 + \varepsilon)^{-1} i_0 \leq i(y,T_y S) \leq (1 + \varepsilon) i_0.
\end{equation}
If $r > 0$ is small enough so that $B_r \subset B_R \cap f^{-1}(B_{R'})$, we have $S_r \subset S \cap B_{R'}$ and thus
\begin{equation}
    (1 + \varepsilon)^{-1} i_0 \leq \frac{\II(S_r)}{\HH^d(S_r)} \leq (1 + \varepsilon) i_0.
\end{equation}
The function $f$ is $(1+\varepsilon)$-bilipschitz on $V \cap B_r$ so this simplifies to 
\begin{equation}
    (1 + \varepsilon)^{d-1} i_0 \leq \frac{\II(S_r)}{\HH^d(V \cap B_r)} \leq (1 + \varepsilon)^{d+1} i_0.
\end{equation}
We deduce that
\begin{equation}
    \lim_{r \to 0} \frac{\II(S_r)}{\HH^d(V \cap B_r)} = i_0
\end{equation}
and by the same reasoning,
\begin{equation}
    \lim_{r \to 0} \frac{\II(V \cap B_r)}{\HH^d(V \cap B_r)} = i_0.
\end{equation}
The second axiom follows. It is more difficult to construct integrands that yield the third axiom. A first example is when $\II = \HH^d$ or when $i$ depends on $x$ alone with $i = j$ (we will detail this later). Let us try to relate our integrands to other definitions. The energy introduced by Almgren in \cite[Definition 1.6(2)]{A1} is like above with $X = \R^n$ and $i$ of class $C^3$ but (iii) is replaced by a stronger condition called \emph{ellipticity bound}. We present this condition in more detail. For $x \in \R^n$, let
\begin{equation}
    \II^x(S) = \int_{S_r} i(x,T_yS) \, \mathrm{d}\HH^d(y) + j(x) \HH^d(S_u)
\end{equation}
be defined on Borel $\HH^d$ finite sets $S \subset \R^n$. Almgren requires that there exists a continuous function $c\colon \R^n \to ]0,\infty[$ such that for all $x \in \R^n$,
\begin{multline}\label{ellipticity_almgren}
    \II^x(S \cap B(x,r)) - \II^x(V \cap B(x,r)) \\\geq c(x) \left[\HH^d(S \cap B(x,r)) - \HH^d(V \cap B(x,r))\right]
\end{multline}
whenever $r > 0$, $V$ is a $d$-plane passing through $x$ and $S \subset \overline{B}(x,r)$ is a compact, $\HH^d$ rectifiable, $\HH^d$ finite set which spans $V \cap \partial B(x,r)$. In \cite[Definition IV.1(7)]{A2}, Almgren restricts this definition to the functions $c$ which are positive constants. The energy introduced by De Lellis, De Philippis, De Rosa, Ghiraldin and Maggi in \cite[Definitions 1.3, 1.5 and Remark 1.8]{I5} is also like above with $X = \R^n$ and $i$ of class $C^1$ but (iii) is replaced by a new axiom called \emph{atomic condition}. This axiom characterizes the integrands for which Allard's rectifiability theorem holds (\cite{I3}). In codimension one, it is equivalent to a convexity condition on the integrand. In the general case, \cite{DRK} shows that the atomic condition implies an ellipticity condition of the form
\begin{multline}\label{ellipticity_weak}
    \left[\HH^d(S \cap B(x,r)) - \HH^d(V \cap B(x,r))\right] > 0 \\\implies \left[\II^x(S \cap B(x,r)) - \II^x(V \cap B(x,r))\right] > 0
\end{multline}
whenever $r > 0$, $V$ is a $d$-plane passing through $x$ and $S \subset \overline{B}(x,r)$ is a compact, $\HH^d$ finite set which spans $V \cap \partial B(x,r)$.

In this paragraph, we compare (iii) to Almgren’s ellipticity bound. We allow $d$-dimensional sets $S$ which have a purely unrectifiable part because we allow competitors which have a purely unrectifiable part in the direct method (Proposition \ref{cor_direct}). Almgren only tests the ellipticity bound against rectifiable sets but \cite[Corollary 5.13]{DRK} justifies that this is not a loss of generality. Next, we show that an inequality such as (\ref{ellipticity_almgren}) implies (\ref{ellipticity_david}). We work at the point $x = 0$ and we fix a constant $c > 0$. Let $r > 0$ (to be chosen small enough), let $V$ be a $d$-plane passing through $0$, let $S \subset \overline{B}(0,r)$ be a compact $\HH^d$ finite set such that $V \cap \overline{B}(0,r) \subset p_V(S)$ and let us assume that
\begin{multline}\label{S_assumptions}
    \II^0(S \cap B(0,r)) - \II^0(V \cap B(0,r)) \\\geq c \left[\HH^d(S \cap B(0,r)) - \HH^d(V \cap B(0,r))\right].
\end{multline}
Note that $\HH^d(V \cap B(0,r)) \leq \HH^d(S \cap B(0,r))$ because $V \cap \overline{B}(0,r) \subset p_V(S)$ and $p_V$ is $1$-Lipschitz. We define
\begin{multline}
    \varepsilon(r) = \sup\Set{\abs{i(y,W) - i (0,W)} | y \in B(0,r),\ W \in G(d,n)}\\+ \sup\Set{\abs{j(y) - j(0)} | y \in B(0,r)}.
\end{multline}
The function $i$ is uniformly continuous on the compact set $\overline{B}(0,1) \times G(d,n)$ and $j$ as well on $\overline{B}(0,1)$ so $\lim_{r \to 0} \varepsilon(r) = 0$. We assume $r$ small enough so that $\varepsilon(r) \leq c$. According to the definition of $\varepsilon(r)$,
\begin{equation}
    \abs{\II(S \cap B(0,r)) - \II^0(S \cap B(0,r))} \leq \varepsilon(r) \HH^d(S \cap B(0,r))
\end{equation}
and similarly
\begin{equation}
    \abs{\II(V \cap B(0,r)) - \II^0(V \cap B(0,r))} \leq \varepsilon(r) \HH^d(V \cap B(0,r)).
\end{equation}
Plugging this into (\ref{S_assumptions}), we get
\begin{align}
    \begin{split}
        &\II(S \cap B(0,r)) - \II(V \cap B(0,r))\\
        &\geq c \left[\HH^d(S \cap B(0,r)) - \HH^d(V \cap B(0,r))\right]\\
        &\qquad - \varepsilon(r) \HH^d(S \cap B(0,r)) - \varepsilon(r) \HH^d(V \cap B(0,r))
    \end{split}\\
    \begin{split}
        &\geq (c - \varepsilon(r))\left[\HH^d(S \cap B(0,r)) - \HH^d(V \cap B(0,r))\right]\\
        &\qquad - 2 \varepsilon(r) \HH^d(V \cap B(0,r))
    \end{split}\\
        &\geq - 2 \varepsilon(r) \HH^d(V \cap B(0,r))\\
        &\geq - 2 \varepsilon(r) \omega_d r^d
\end{align}
where $\omega_d$ is the $\HH^d$ measure of the $d$-dimensional unit disk. This is (\ref{ellipticity_david}) as promised. We should not forget to mention that if $i$ depends on $x$ alone with $i = j$, then we have $\II^0 = i(0) \HH^d$ so (\ref{S_assumptions}) is clearly verified.

\section{Operations on Reifenberg competitors}
We come back to Reifenberg competitors and their properties. We present the main operations that preserve these competitors. Unless otherwise indicated, $\Gamma$ is just a closed set of $\R^n$. 
\begin{lem}\label{rei_supset}
    Let $E$ be a Reifenberg competitor. Let $F$ be a compact subset of $\R^n$ containing $E$. Then $F$ is a Reifenberg competitor.
\end{lem}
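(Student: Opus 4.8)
The plan is to reduce the statement to the functoriality of \v{C}ech homology applied to the chain of inclusions
\begin{equation*}
    \Gamma \hookrightarrow E \cup \Gamma \hookrightarrow F \cup \Gamma,
\end{equation*}
which makes sense because $E \subset F$ forces $E \cup \Gamma \subset F \cup \Gamma$. First I would name the two inclusions, say $\iota\colon \Gamma \hookrightarrow E \cup \Gamma$ and $\kappa\colon E \cup \Gamma \hookrightarrow F \cup \Gamma$, and observe that their composite $\kappa \circ \iota$ is exactly the inclusion $\Gamma \hookrightarrow F \cup \Gamma$ that appears in Definition \ref{defi_reifenberg} for the competitor $F$.

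Next I would pass to homology and use that the induced maps respect composition, i.e. $(\kappa \circ \iota)_* = \kappa_* \circ \iota_*$. This is encoded in the commutative triangle
\begin{equation*}
    \begin{tikzcd}
        H_{d-1}(\Gamma) \arrow[r,"\iota_*"] \arrow[dr] &   H_{d-1}(E \cup \Gamma) \arrow[d,"\kappa_*"]\\
        &   H_{d-1}(F \cup \Gamma),
    \end{tikzcd}
\end{equation*}
where the diagonal arrow is the morphism induced by inclusion for $F$. Since $E$ is a Reifenberg competitor, the horizontal map $\iota_*$ is zero on $L$; hence the diagonal map $\kappa_* \circ \iota_*$ is zero on $L$ as well, which is precisely the condition of Definition \ref{defi_reifenberg} for $F$. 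This yields the claim.

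There is no substantial obstacle here: the whole content is that \v{C}ech homology is a covariant functor on the relevant category of spaces and (inclusion) maps, so that inclusion-induced morphisms compose and the triangle above genuinely commutes. The only point requiring a word of justification is that $F$ is assumed compact, so that it is again an admissible object for Definition \ref{defi_reifenberg}, and that \v{C}ech homology is defined and functorial for the compact sets $\Gamma$, $E \cup \Gamma$, $F \cup \Gamma$ at hand; both are immediate from the setup. Thus the lemma is a direct consequence of functoriality, with no homological computation needed.
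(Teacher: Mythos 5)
Your proof is correct and is essentially identical to the paper's: both reduce the claim to the commutative triangle of inclusion-induced morphisms $H_{d-1}(\Gamma) \to H_{d-1}(E \cup \Gamma) \to H_{d-1}(F \cup \Gamma)$ and invoke functoriality. Your write-up merely makes the composition $(\kappa\circ\iota)_* = \kappa_*\circ\iota_*$ explicit, which the paper leaves implicit in its diagram.
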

\begin{proof}
    This follows from the following commutative diagram
    \begin{equation*}
        \begin{tikzcd}
            H_{d-1}(\Gamma) \arrow[r] \arrow [rd]   &   H_{d-1}(E \cup \Gamma) \arrow [d]\\
                                                    &   H_{d-1}(F \cup \Gamma)
        \end{tikzcd}
    \end{equation*}
    where the arrows correspond to the morphisms induced by inclusion.
\end{proof}

\begin{lem}\label{rei_image}
    Let $E$ be a Reifenberg competitor. Let $f\colon E \cup \Gamma \to \R^n$ be a continuous map such that there exists a continuous map $F\colon I \times \Gamma \to \Gamma$ satisfying $F_0 = \mathrm{id}$ and $F_1 = f$. Then $f(E)$ is a Reifenberg competitor.
\end{lem}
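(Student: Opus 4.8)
The plan is to reduce the claim to the homotopy invariance of \v{C}ech homology, applying the homotopy $F$ not inside $\Gamma$ but inside the larger space $f(E) \cup \Gamma$. The intuition is that $f$ pushes the whole competitor forward while moving $\Gamma$ only within itself, so the inclusion of $\Gamma$ into the target is homotopic to the composite of $f$ with the inclusion of $\Gamma$ into the source; the vanishing of the original inclusion on $L$ then propagates by functoriality.

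First I would check that $f$ restricts to a continuous map between the relevant spaces. Since $F_1 = f$ on $\Gamma$ and $F_t(\Gamma) \subset \Gamma$ for every $t$, we have $f(\Gamma) \subset \Gamma$; combined with the definition of $f(E)$ this gives $f(E \cup \Gamma) = f(E) \cup f(\Gamma) \subset f(E) \cup \Gamma$. Thus $f$ may be regarded as a continuous map $\bar f\colon E \cup \Gamma \to f(E) \cup \Gamma$. Moreover $f(E)$ is the continuous image of the compact set $E$, hence compact, and $f(E) \cup \Gamma$ is closed, so it is a legitimate space for the theory and $f(E)$ is a candidate Reifenberg competitor.

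Next, denote by $i\colon \Gamma \hookrightarrow E \cup \Gamma$ and $j\colon \Gamma \hookrightarrow f(E) \cup \Gamma$ the inclusions. The key observation is that the two maps $j$ and $\bar f \circ i$ from $\Gamma$ to $f(E)\cup\Gamma$ are homotopic: indeed $\bar f \circ i$ is the map $x \mapsto f(x) = F_1(x)$, while $j$ is $x \mapsto x = F_0(x)$, and $F$ — composed with the inclusion $\Gamma \hookrightarrow f(E) \cup \Gamma$, which is legitimate precisely because $F_t(\Gamma) \subset \Gamma$ — connects them inside $f(E) \cup \Gamma$. By the homotopy axiom and functoriality of \v{C}ech homology we then obtain, on $H_{d-1}(\Gamma)$,
\begin{equation*}
    j_* = (\bar f \circ i)_* = \bar f_* \circ i_*.
\end{equation*}
Since $E$ is a Reifenberg competitor, $i_*$ vanishes on $L$, hence so does $j_*$; this is exactly the assertion that $f(E)$ is a Reifenberg competitor.

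The only delicate point is the invocation of homotopy invariance and functoriality of \v{C}ech homology on the (non-triangulable) closed sets at hand. I would stress that these two properties hold for \v{C}ech homology with an arbitrary coefficient group — it is only the Exactness Axiom that forces the compactness hypothesis on $G$ — so this argument, like Lemma \ref{rei_limit}, needs no restriction on the coefficient group. One must also be careful to read the homotopy in the enlarged target $f(E)\cup\Gamma$ rather than in $\Gamma$: that reindexing of the target is exactly what converts the hypothesis ``$f|_\Gamma$ is homotopic to $\mathrm{id}$ in $\Gamma$'' into the relation $j_* = \bar f_* \circ i_*$ that drives the conclusion.
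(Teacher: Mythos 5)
Your proof is correct and takes essentially the same route as the paper: the identity $j_* = \bar f_* \circ i_*$ that you extract from the homotopy $\iota \circ F$ is exactly the content of the paper's commutative square together with the observation that $f_* = \mathrm{id}$ on $H_{d-1}(\Gamma)$, and your remark that only the homotopy axiom and functoriality (not exactness) are needed matches the paper's point about arbitrary coefficient groups.
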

\begin{proof}
    Consider the following commutative diagram
    \begin{equation*}
        \begin{tikzcd}
            H_{d-1}(\Gamma)    \arrow[r] \arrow[d,"f_*"]    &   H_{d-1}(E \cup \Gamma) \arrow[d,"f_*"]\\
            H_{d-1}(\Gamma) \arrow[r]                       &   H_{d-1}(f(E) \cup \Gamma)
        \end{tikzcd}
    \end{equation*}
    where the unlabeled arrows are the morphisms induced by inclusion. As $f\colon \Gamma \to \Gamma$ is homotopic to $\mathrm{id}$, $f_* = \mathrm{id}$ on $H_{d-1}(\Gamma)$.
\end{proof}
This lemma assumed $f$ to be defined on $E \cup \Gamma$ but the image $f(E)$ depends only on the values of $f$ on $E$. The two following remarks complete the lemma by justify that it is generally enough for $f$ to be defined on $E$. The second remark applies to sliding deformations when $\Gamma$ is regular enough.
\begin{rmk}
    Let $f\colon E \to \R^n$ be a continuous map such that $f = \mathrm{id}$ on $E \cap \Gamma$. As $E$ and $\Gamma$ are closed sets of $\R^n$, the gluing
    \begin{equation}
        g =
        \begin{cases}
            f           &   \text{in} \ E\\
            \mathrm{id} &   \text{in} \ \Gamma
        \end{cases}
    \end{equation}
    is continuous. Then $G_t = (1-t) \mathrm{id} + tg$ is a continuous homotopy from $\mathrm{id}$ to $g$ and $G_t = \mathrm{id}$ on $\Gamma$. We deduce that $f(E)$ is a Reifenberg competitor.
\end{rmk}
\begin{rmk}
    Let $f\colon E \to \R^n$ be a continuous map such that there exists a continuous map $F\colon I \times (E \cap \Gamma) \to \Gamma$ satisfying $F_0 = \mathrm{id}$ and $F_1 = f$. Let us assume that $\Gamma$ is a neighborhood retract i.e. there exists an open set $O \subset \R^n$ and a continuous map $r\colon O \to \Gamma$ such that $r = \mathrm{id}$ on $\Gamma$. According to the Homotopy Extension Lemma, $F$ extends as a continuous map $F\colon I \times \Gamma \to \Gamma$. Moreover, the gluing
    \begin{equation}
        g =
        \begin{cases}
            f   &   \text{in} \ E\\
            F_1 &   \text{in} \ \Gamma
        \end{cases}
    \end{equation}
    is continous because $E$ and $\Gamma$ are closed sets of $\R^n$. We deduce that $f(E)$ is a Reifenberg competitor.
\end{rmk}

\begin{lem}\label{rei_limit}
    Let $(E_k) \subset \R^n$ be a sequence of Reifenberg competitors. Let $E$ be a compact subset of $\R^n$. We assume that
    \begin{enumerate}
        \item there exists a compact set $C \subset \R^n$ such that for all $k$, $E_k \subset C$;
        \item for all open sets $V$ containing $E \cup \Gamma$,
            \begin{equation}
                \lim_k \HH^d(E_k \setminus V) = 0.
            \end{equation}
    \end{enumerate}
    Then $E$ is a Reifenberg competitor.
\end{lem}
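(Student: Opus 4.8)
The plan is to prove that every $v \in L$ is killed by the inclusion-induced morphism $H_{d-1}(\Gamma) \to H_{d-1}(E \cup \Gamma)$. The whole point will be to rely only on the \emph{continuity} of \v{C}ech homology and never on its exactness: continuity holds for an arbitrary coefficient group, which is exactly what should let us drop any assumption on the group. Writing $E \cup \Gamma$ as the intersection of a cofinal, decreasing family of compact polyhedral neighborhoods $N$, continuity identifies $H_{d-1}(E \cup \Gamma)$ with $\varprojlim_N H_{d-1}(N)$; in particular the canonical map $H_{d-1}(E \cup \Gamma) \to \varprojlim_N H_{d-1}(N)$ is injective. It therefore suffices to show, for each such neighborhood $N$, that the composite $H_{d-1}(\Gamma) \to H_{d-1}(N)$ vanishes on $L$.

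So I would fix $N$, choose an intermediate polyhedral set $W$ with $E \cup \Gamma \subset W \subset\subset \mathrm{int}(N)$, and use hypothesis (ii) to make $\HH^d(E_k \setminus W)$ as small as desired for large $k$—in particular smaller than the $\HH^d$-measure of one $d$-face of a fine grid adapted to $N \setminus \overline{W}$. A Federer--Fleming projection that is the identity on $W$ (hence on $\Gamma$ and on $E_k \cap \Gamma$) then pushes the exterior part of $E_k$ onto the $d$-skeleton of the grid; since that exterior mass lies below one face, a further radial projection collapses it onto the $(d-1)$-skeleton. This yields a deformation $f_k$ of $E_k$, equal to the identity on $\Gamma$, whose image $\widetilde E_k := f_k(E_k)$ coincides with $E_k$ near $E \cup \Gamma$ and is an honest polyhedron outside $W$. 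Because $f_k$ fixes $\Gamma$, Lemma \ref{rei_image} (or rather the remark following it, the homotopy being constant on $\Gamma$) ensures that $\widetilde E_k$ is again a Reifenberg competitor, so $v \mapsto 0$ in $H_{d-1}(\widetilde E_k \cup \Gamma)$.

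The gain is that outside $W$ the set $\widetilde E_k \cup \Gamma$ is polyhedral, so there \v{C}ech homology agrees with singular homology—by the same uniqueness theorem invoked for the first lemma of this section—and the exact sequence and excision become available for \emph{every} coefficient group. The exterior $(d-1)$-skeleton consists of only boundedly many faces (a number controlled by the exterior mass), forming a low-complexity polyhedral tentacle, which I would retract into $N$ along the collar $\mathrm{int}(N) \setminus \overline{W}$; this is precisely the place where Reifenberg had to cut the tentacles and patch the holes using exactness with a compact group, and here exactness is instead free of charge because we have reduced to a polyhedron. Once $\widetilde E_k \cup \Gamma \subset N$, functoriality supplies the factorisation $H_{d-1}(\Gamma) \to H_{d-1}(\widetilde E_k \cup \Gamma) \to H_{d-1}(N)$, and since $v$ already dies in the middle term it dies in $H_{d-1}(N)$, which by the first paragraph completes the argument. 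I expect this last step—carrying out the tentacle elimination while keeping $\Gamma$ fixed and the whole deformation inside $N$—to be the technical heart and the main obstacle, the measure hypothesis (ii) entering exactly to bound the number of exterior faces that must be pushed in.
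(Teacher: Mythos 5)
Your overall architecture is sound and, once unwound, runs parallel to the paper's: both arguments use hypothesis (ii) together with a Federer--Fleming projection to replace $E_k$, away from a small neighborhood of $E \cup \Gamma$, by a set of dimension at most $d-1$, and both then exploit the fact that a $(d-1)$-dimensional appendage carries no $d$-chains and therefore cannot create new $(d-1)$-boundaries. The paper implements this last point directly on the nerves of open coverings (the complexes $K(E\cup\Gamma)\subset K(E_k\cup\Gamma)$ are arranged to have the same $d$-simplexes, hence the same boundaries in degree $d-1$, hence $j_{\gamma*}$ is injective), which is why it needs the general-position Lemma \ref{spheres_positions}; you implement it on compact polyhedral neighborhoods via continuity, which avoids that bookkeeping but costs you compactness (see the end of the next paragraph).

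The genuine gap sits exactly at the step you flag as the technical heart. You propose to \emph{retract} the exterior $(d-1)$-dimensional polyhedral tentacle $P$ into $N$ along a collar. Such a retraction need not exist: for $d=3$ take $N$ a solid-torus-shaped polyhedral neighborhood and $P$ a $2$-disk outside $N$ whose boundary circle is a homotopically nontrivial loop on $\partial N$; a retraction of $N\cup P$ onto $N$ fixing $P\cap N$ would null-homotope that loop inside $N$. What saves you is that no retraction is needed, only injectivity of $H_{d-1}(N)\to H_{d-1}(N\cup\Sigma)$, where $\Sigma$ denotes the \emph{full} $(d-1)$-skeleton of the finitely many grid cells involved (note that $\widetilde{E}_k\setminus W$ is only a compact subset of that skeleton, not itself a subpolyhedron, so you must enlarge to $\Sigma$ before invoking the comparison between \v{C}ech and simplicial homology). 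Triangulate $N\cup\Sigma$ with $N$ as a subcomplex; since $\Sigma$ has dimension at most $d-1$, the pair $(N\cup\Sigma,N)$ has no relative $d$-chains, so $H_d(N\cup\Sigma,N)=0$, and the exact sequence of this triangulable pair (valid in \v{C}ech homology for every coefficient group by the Eilenberg--Steenrod uniqueness theorem) gives the injectivity. Since your $v$ dies in $H_{d-1}(\widetilde{E}_k\cup\Gamma)$ and $\widetilde{E}_k\cup\Gamma\subset N\cup\Sigma$, it dies in $H_{d-1}(N\cup\Sigma)$, hence in $H_{d-1}(N)$, which is what your first paragraph requires. This is the same algebraic mechanism as the paper's, transposed from nerves to neighborhoods. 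With this replacement your argument closes, but only under the extra hypothesis that $\Gamma$ (hence $E\cup\Gamma$) is compact: the continuity theorem you invoke concerns inverse limits of \emph{compact} pairs, whereas the lemma assumes only that $\Gamma$ is closed; the paper's covering-nerve argument is insensitive to this distinction.
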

The proof requires a preliminary lemma about the general position of spheres. For $x \in \R^n$ and $r > 0$, let $S(x,r)$ denote the Euclidean sphere of center $x$ and radius $r$ of $\R^n$. Given an integer $k$, a \emph{$k$-sphere} is an Euclidean sphere of positive radius relative to a $(k+1)$-affine plane. We extend this definition to $k < 0$, by calling \emph{$k$-sphere} the empty set.
\begin{lem}\label{spheres_positions}
    Let $S^k$ be a $k$-sphere in $\R^n$ and let $x$ be a point in $\R^n$. Then for all $r > 0$ (except for at most one value), $S^k \cap S(x,r)$ is a subset of a $(k-1)$-sphere.
\end{lem}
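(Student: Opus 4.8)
The plan is to reduce the whole statement to the elementary geometry of two codimension-one spheres lying in a common affine plane. Let $P$ be the $(k+1)$-plane containing $S^k$, so that $S^k = \{\, y \in P : \abs{y-c} = \rho \,\}$ for a center $c \in P$ and radius $\rho > 0$. I would decompose $x = x' + (x-x')$, where $x' = p_P(x)$ is the orthogonal projection of $x$ onto $P$, and set $h = \mathrm{dist}(x,P) = \abs{x-x'}$. Since $x-x' \perp \overrightarrow{P}$, Pythagoras gives $\abs{y-x}^2 = \abs{y-x'}^2 + h^2$ for every $y \in P$, so that
\[
    S(x,r) \cap P = \{\, y \in P : \abs{y-x'}^2 = r^2 - h^2 \,\}.
\]
This set is empty for $r < h$, the single point $\{x'\}$ for $r = h$, and a $k$-sphere $\Sigma_r \subset P$ of center $x'$ and radius $\sqrt{r^2-h^2}$ for $r > h$. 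Because $S^k \subset P$ we always have $S^k \cap S(x,r) = S^k \cap (S(x,r) \cap P)$, so the task reduces to intersecting the two $k$-spheres $S^k$ and $\Sigma_r$ inside $P$.

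The heart of the matter is the regime $r > h$, and here I would split on whether the two spheres are concentric. If $x' = c$, then $S^k$ and $\Sigma_r$ are concentric: they coincide exactly when $r^2 - h^2 = \rho^2$, i.e. for the single radius $r = \sqrt{\rho^2 + h^2}$, and are disjoint for every other $r$. The coincidence $S^k = \Sigma_r$ is precisely the one exceptional value the statement tolerates — it is the unique radius for which $S(x,r) \supset S^k$ — while for all other radii the intersection is empty. If instead $x' \ne c$, subtracting the defining equations $\abs{y-c}^2 = \rho^2$ and $\abs{y-x'}^2 = r^2 - h^2$ cancels the quadratic term and leaves the affine equation $2\,y\cdot(x'-c) = \rho^2 - (r^2-h^2) + \abs{x'}^2 - \abs{c}^2$, which cuts out a proper hyperplane $\Pi_r$ of $P$ (proper because $x' \ne c$). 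Hence $S^k \cap \Sigma_r \subset S^k \cap \Pi_r$, and the intersection of the $k$-sphere $S^k$ with the hyperplane $\Pi_r$ is, according as $\mathrm{dist}(c,\Pi_r)$ is less than, equal to, or greater than $\rho$, a $(k-1)$-sphere, a single point, or empty. For $k \geq 1$ each of these is contained in a $(k-1)$-sphere, so no exceptional value arises when $x' \ne c$; this is exactly the classical radical-hyperplane fact that two non-concentric spheres meet in the slice of one of them by a hyperplane.

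Finally I would tidy up the remaining radii and the low dimensions, which is where I expect the only real care to be needed. For $r \le h$ the intersection $S^k \cap S(x,r)$ is empty or the single point $\{x'\}$, again contained in a $(k-1)$-sphere when $k \ge 1$, and $r=h$ is in any case just one radius. The case $k < 0$ is vacuous since then $S^k = \emptyset$. Collecting everything, for $k \ge 1$ the only radius at which $S^k \cap S(x,r)$ fails to sit inside a $(k-1)$-sphere is $r = \sqrt{\rho^2 + h^2}$ in the concentric configuration $x'=c$, giving the claimed "at most one" exceptional value. The main obstacle is therefore not a computation but the bookkeeping of the degenerate regimes — the tangency case $\mathrm{dist}(c,\Pi_r) = \rho$ and the coincidence case — together with the observation that single-point and empty intersections are harmless precisely because $k-1 \ge 0$ leaves room for a genuine $(k-1)$-sphere; the borderline dimension $k=0$, where a $(-1)$-sphere is empty, is the one place demanding separate scrutiny and is naturally read under the convention that the substantive range is $k \ge 1$.
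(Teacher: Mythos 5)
Your proof is correct and follows essentially the same route as the paper: both arguments subtract the two quadratic sphere equations to obtain an affine (radical hyperplane) condition and then split according to whether the configuration is degenerate, your $x'=c$ case corresponding to the paper's $x=x_0$ case. Your preliminary orthogonal projection onto the carrying plane $P$ is only a cosmetic variation, though it has the small benefit of identifying the unique exceptional radius $r=\sqrt{\rho^2+h^2}$ explicitly; like the paper, your argument genuinely covers only $k\ge 1$ (and vacuously $k<0$), with $k=0$ flagged rather than resolved.
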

\begin{proof}
    We assume $k \geq 1$. The proof is based on the observation that the intersection of a $(n-1)$-sphere with a $k$-affine plane is either empty, a point, or a $(k-1)$-sphere. In all cases, this intersection is part of a $(k-1)$-sphere. Let $P_0$ be the $(k+1)$-affine plane associated to $S^k$, let $x_0 \in P_0$ be the center of $S^k$ and $r_0 > 0$ be its radius so $S^k = P_0 \cap S(x_0,r_0)$. For $r > 0$, a point $y \in S^k \cap S(x,r)$ is characterized by the system
    \begin{subequations}
        \begin{empheq}[left=\empheqlbrace]{align}
            &   y \in P_0\\
            &   \abs{y - x} = r\\
            &   \abs{y - x_0} = r_0
        \end{empheq}
    \end{subequations}
    or equivalently
    \begin{subequations}
        \begin{empheq}[left=\empheqlbrace]{align}
            &   y \in P_0\label{sphere1}\\
            &   \abs{y - x} = r\label{sphere3}\\
            &   \abs{y - x}^2 - \abs{y - x_0}^2 = r^2 - r_0^2\label{sphere2}.
        \end{empheq}
    \end{subequations}
    Assume $x = x_0$. If $r \ne r_0$ (this excludes one value of $r$), equation (\ref{sphere2}) has no solutions so $S^k \cap S(x,r)$ is empty so is part of a $(k-1)$-sphere. Assume $x \ne x_0$. Equation (\ref{sphere2}) defines an hyperplane and, if $\abs{x - x_0}^2 \ne r^2 - r_0^2$ (this excludes at most one value of $r$), this hyperplane does not contain $x_0$. Then, the intersection of the two planes (\ref{sphere1}) and (\ref{sphere2}) is included in a $k$-affine plane. The intersection of this plane with the sphere (\ref{sphere3}) is part of a $(k-1)$-sphere as seen in introduction.
\end{proof}

The following proof makes use of the notion of complex (as in \cite[Subsection 2.1]{Labourie}) and the Federer--Fleming projection (as in \cite[Propositon 3.1]{DS} or \cite[Lemma 2.9]{Labourie}).
\begin{proof}[Proof of Lemma \ref{rei_limit}]
    By Lemma \ref{rei_supset}, the sequence $(E_k \cup E)_k$ also satisfies the lemma assumptions. So without loss of generality, we assume that for all $k$, $E \subset E_k$. We also assume that $\Gamma$ is non-empty so that the coverings considered in the proof are also non-empty. We define a \emph{general covering} as an open family $\gamma = (\gamma_j)_{j \in V_\gamma}$ of $\R^n$ satisfying the following properties:
    \begin{enumerate}
        \item there exists $k$ such that $E_k \cup \Gamma \subset \bigcup_{j \in V_\gamma} \gamma_j$;
        \item for every subset $S \subset V_\gamma$ of cardinal $d+1$,
            \begin{equation}
                \bigcap_S \gamma_j \ne \emptyset \implies (E \cup \Gamma) \cap \bigcap_S \gamma_j \ne \emptyset.
            \end{equation}
    \end{enumerate}
    The main goal of the proof is to show that for any open covering $\alpha = (\alpha_i)_i$ of $E \cup \Gamma$, there exists a general covering $\gamma = (\gamma_j)_{j \in V_\gamma}$ such that $((E \cup \Gamma)\cap \gamma_j)_j$ is a refinement of $\alpha$. Let us explain how to conclude from there. Let $\gamma$ be a general covering and let $k$ be an index such that $E_k \cup \Gamma \subset \bigcup_{j \in V_\gamma} \gamma_j$. The inclusions $\Gamma \subset E \cup \Gamma \subset E_k \cup \Gamma$ induce morphisms $i_*$ and $j_*$:
    \begin{equation*}
        \begin{tikzcd}
            H_{d-1}(\Gamma) \arrow[r,"i_*"]    &   H_{d-1}(E \cup \Gamma) \arrow[r,"j_*"] &   H_{d-1}(E_k \cup \Gamma).
        \end{tikzcd}
    \end{equation*}
    The covering $\gamma$ induces simplicial complexes
    \begin{subequations}\label{simplicial_complexes}
    \begin{align}
        K(\Gamma)           &   = \set{S \subset V_\gamma \ \text{finite} | \Gamma \cap \bigcap_S \gamma_j \ne \emptyset},\\
        K(E \cup \Gamma)    &   = \set{S \subset V_\gamma \ \text{finite} | (E \cup \Gamma) \cap \bigcap_S \gamma_j \ne \emptyset},\\
        K(E_k \cup \Gamma)  &   = \set{S \subset V_\gamma \ \text{finite} | (E_k \cup \Gamma) \cap \bigcap_S \gamma_j \ne \emptyset}
    \end{align}
\end{subequations}
    and the inclusions $K(\Gamma) \subset K(E \cup \Gamma) \subset K(E_k \cup \Gamma)$ induce morphisms $i_{\gamma*}$ and $j_{\gamma*}$:
    \begin{equation*}
        \begin{tikzcd}
            H_{d-1}(K(\Gamma)) \arrow[r,"i_{\gamma*}"]    &   H_{d-1}(K(E \cup \Gamma)) \arrow[r,"j_{\gamma*}"] &   H_{d-1}(K(E_k \cup \Gamma)).
        \end{tikzcd}
    \end{equation*}
    Finally, there exists projection morphisms $\pi_\Gamma$, $\pi$, $\pi_k$ such that the following diagramm commutes
    \begin{equation*}
        \begin{tikzcd}
            H_{d-1}(\Gamma) \arrow[r,"i_*"] \arrow[d,"\pi_\Gamma"]  &   H_{d-1}(E \cup \Gamma) \arrow[r,"j_*"] \arrow[d,"\pi"]              &   H_{d-1}(E_k \cup \Gamma) \arrow[d,"\pi_k"]\\
            H_{d-1}(K(\Gamma)) \arrow[r,"i_{\gamma*}"]              &   H_{d-1}(K(E \cup \Gamma)) \arrow[r,"j_{\gamma*}"]   &   H_{d-1}(K(E_k \cup \Gamma)).
        \end{tikzcd}
    \end{equation*}
    As $E_k$ is a Reifenberg set, we have $j_* \circ i_* = 0$ on $L$. The second axiom of general coverings implies that the simplicial complexes $K(E \cup \Gamma)$ and $K(E_k \cup \Gamma)$ have the same $d$-simplexes. Hence the $d$-chains of $K(E \cup \Gamma)$ and $K(E_k \cup \Gamma)$ are identical and they induce the same boundaries. We deduce that $j_{\gamma*}$ is injective and then that $i_{\gamma*} = 0$ on $\pi_\Gamma(L)$. Since every open covering $\alpha$ of $E \cup \Gamma$ is refined by such covering $\gamma$, we conclude that $i_*$ is nul on $L$.

    \emph{Step 1.} We fix a relative open covering $\alpha = (\alpha_i)_i$ of $E \cup \Gamma$ and we build a locally finite open sequence $\beta = (\beta_j)_{j \in \N}$ in $\R^n$ such that
    \begin{enumerate}
        \item $\beta$ cover $E \cup \Gamma$ and $((E \cup \Gamma) \cap \beta_j)_j$ is a refinement of $\alpha$;
        \item for every non-empty finite set $S \subset \N$, the intersection of boundaries $\bigcap_S \partial \beta_i$ is included in a finite union of $(n-m)$-spheres, where $m$ is the cardinal of $S$;
        \item for every non-empty finite set $S \subset \N$,
            \begin{equation}
                \bigcap_S \beta_j \ne \emptyset \implies (E \cup \Gamma) \cap \bigcap_S \beta_j \ne \emptyset.
            \end{equation}
    \end{enumerate}
    We work with the closed set $F := E \cup \Gamma$. For all $x \in F$, there exists $i$ such that $x \in \alpha_i$ so there exists an open ball $B$ centred at $x$ such that $F \cap 2B \subset \alpha_i$. We extract a sequence of open ball $(B_j)_{j \in \N}$ covering $F$ such that $(2B_j)_j$ is locally finite in $\R^n$ and $(F \cap 2B_j)_j$ is a refinement of $\alpha$. Next, we build by induction an open sequence $(\beta_j)_{j \in \N}$ such that for all $j$,
    \begin{enumerate}
        \item $F \cap \overline{B_j} \subset \beta_j$ and there exists $i$ such that $F \cap \beta_j \subset \alpha_i$.
        \item for every non-empty set $S \subset \set{1,\ldots,j}$, the intersection of boundaries $\bigcap_S \partial \beta_i$ is included in a finite union of $(n-m)$-spheres, where $m$ is the cardinal of $S$;
        \item for every non-empty set $S \subset \set{1,\ldots,j}$,
            \begin{equation}
                \bigcap_S \beta_i \ne \emptyset \implies F \cap \bigcap_S \beta_i \ne \emptyset.
            \end{equation}
    \end{enumerate}
    We start with $\beta_0 = 2B_0$. Let us assume that $\beta_0,\ldots,\beta_{j-1}$ has been built. For all $x \in F \cap \overline{B_j}$, there exists an open ball $B$ centered at $x$ such that
    \begin{enumerate}
        \item $B \subset 2B_j$;
        \item for every set $S \subset \set{1,\ldots,j-1}$, the intersection of boundaries $\partial B \cap \bigcap_S \partial \beta_i$ is included in a finite union of $(n-m-1)$-spheres, where $m$ is the cardinal of $S$; 
        \item for every set $S \subset \set{1,\ldots,j-1}$ such that $(F \cap \overline{B_j}) \subset \R^n \setminus \bigcap_S \overline{\beta_j}$, we have $\overline{B} \subset \R^n \setminus \bigcap_S \overline{\beta_j}$. Equivalently,
            \begin{equation}
                \overline{B} \cap \bigcap_S \overline{\beta_j} \ne \emptyset \implies (F \cap \overline{B_j}) \cap \bigcap_S \overline{\beta_j} \ne \emptyset.
            \end{equation}
    \end{enumerate}
    The first and third condition mean that the radius of $B$ is small enough. The second conditions holds for almost all radius of $B$ according to Lemma \ref{spheres_positions}. Extract a finite covering of $F \cap \overline{B_j}$ by such balls $B$ and denote $\beta_j$ their union. Then $\beta_j$ solves the next step of the induction.

    \emph{Step 2. We complete the family $\beta$ with an open set $\beta_\infty$ to obtain a covering of one of the $E_k$. We take care not to introduce new $d$-simplexes in the sense of (\ref{simplicial_complexes}).} We want to reduce the problem to the case where for some $k$, $E_k \setminus \bigcup_j \beta_j$ is a $(d-1)$-dimensional grid. Using a Federer--Fleming projection, we are going to project $E_k$ in a $(d-1)$-dimensional grid away from $E \cup \Gamma$. Let $\ell > 0$ and consider a complex $K$ describing a uniform grid of sidelength $(\sqrt{n})^{-1}\ell$ in $\R^n$. Note that
    \begin{equation}
        \R^n = \bigcup \set{A | A \in K} = \bigcup \set{\mathrm{int}(A) | A \in K}
    \end{equation}
    and the cells of $K$ have a diameter $\leq \ell$. We select the cells in which we want to perform the Federer--Fleming projection. Let $B_0$ be an open ball such that for all $k$, $E_k \subset \overline{B_0}$. Let $L$ be the subcomplex of $K$ defined by
    \begin{equation}
        L = \set{A \in K | \exists x \in A,\ x \in \overline{2B_0} \ \text{and} \ \mathrm{d}(x,E \cup \Gamma) \geq 2\ell}.
    \end{equation}
    Since $\R^n$ is covered by the interiors of cells $A \in K$, it is clear that
    \begin{equation}\label{grille1}
        \set{x \in \overline{2B_0} | \mathrm{d}(x,E \cup \Gamma) \geq 2\ell} \subset \bigcup \set{\mathrm{int}(A) | A \in L}.
    \end{equation}
    The set $E \cup \Gamma$ is a closed set included in $\bigcup_j \beta_j$ so the function $x \mapsto \mathrm{d}(x,E \cup \Gamma)$ is positive on $\R^n \setminus \bigcup_j \beta_j$. In particular, it has a positive minimum on the compact set $\overline{2B_0} \setminus \bigcup_j \beta_j$. This minimum does not depend on $\ell$ so we can assume $\ell$ small enough so that for all $x \in \overline{2B_0} \setminus \bigcup_j \beta_j$, $\mathrm{d}(x,E \cup \Gamma) > 4\ell$. By contraposition,
    \begin{equation}\label{grille2}
        \set{x \in \overline{2B_0} | \mathrm{d}(x,E \cup \Gamma) \leq 4\ell} \subset \bigcup_j \beta_j.
    \end{equation}
    Next, we introduce the Federer--Fleming projection of $E_k \cap \abs{L}$ in $L$. First, we justify that $\lim_k \HH^d(E_k \cap \abs{L}) = 0$. By local finitness of $K$, $\abs{L}$ is a closed set of $\R^n$. Since the cells of $K$ have a diameter $\leq \ell$, the definition of $L$ implies that the cells of $L$ cannot meet $E \cup \Gamma$. The set $V = \R^n \setminus \abs{L}$ is open and contains $E \cup \Gamma$ so according to the lemma assumptions,
    \begin{equation}
        \lim_k \HH^d(E_k \cap \abs{L}) = 0.
    \end{equation}
    Let an index $k$ be such that $\HH^d(E_k \cap \abs{L}) < \infty$ (it will be precised later). We apply \cite[Lemma 2.9]{Labourie} or \cite[Propositon 3.1]{DS} and we obtain a continuous map $\phi\colon \abs{L} \to \abs{L}$ such that 
    \begin{enumerate}
        \item for all $A \in L$, $\phi(A) \subset A$;
        \item $\phi(E_k \cap \abs{L}) \subset \abs{L} \setminus \bigcup \set{\mathrm{int}(A) | A \in L,\ \mathrm{dim}(A) > d}$;
        \item for all $A \in L^d$,
            \begin{equation}\label{third_axiom}
                \HH^d(\phi(E_k \cap \abs{L}) \cap A) \leq C \HH^d(E_k \cap \abs{L})
            \end{equation}
    \end{enumerate}
    where $C$ is a positive constant that depends only on $n$. When $k$ is big enough (depending on $\ell$), $\HH^d(E_k \cap \abs{L})$ becomes sufficiently small so that one can perform additional projections in the $d$-dimensional cells of $L$. The second axiom becomes
    \begin{equation}
        \phi(E_k \cap \abs{L}) \subset \abs{L} \setminus \bigcup \set{\mathrm{int}(A) | A \in L,\ \mathrm{dim}(A) \geq d};
    \end{equation}
    and thus
    \begin{multline}\label{phi_image}
        \phi(E_k \cap \abs{L}) \cap \bigcup \set{\mathrm{int}(A) | A \in L} \\\subset \bigcup \set{\mathrm{int}(A) | A \in L,\ \mathrm{dim}(A) \leq d-1}.
    \end{multline}
    The sets $E \cup \Gamma$ and $\abs{L}$ are disjoint and closed so we can extend $\phi$ continuously on $E \cup \Gamma$ by $\phi = \mathrm{id}$. Observe that $\abs{\phi - \mathrm{id}} \leq \ell$ because $\phi$ preserves the cells of $L$. We can extend $\phi$ continuously on $\R^n$ in such a way that $\abs{\phi - \mathrm{id}} \leq \ell$. Now, let us show that
    \begin{equation}\label{Ek_image}
        \phi(E_k) \subset \bigcup \set{A | A \in L,\ \mathrm{dim}(A) \leq d-1} \cup \bigcup_{j \in \N} \beta_j.
    \end{equation}
    Remember that $E_k \subset \overline{B_0}$. We assume that $\ell$ is less than the radius of $B_0$ whence $\phi(E_k) \subset \overline{2B_0}$. For $x \in E_k$, we distinguish two cases. If $\mathrm{d}(x,E \cup \Gamma) \leq 3\ell$, then $\mathrm{d}(\phi(x),E \cup \Gamma) \leq 4\ell$ and we have $\phi(x) \in \bigcup_j \beta_j$ by (\ref{grille2}). If $\mathrm{d}(x,E \cup \Gamma) \geq 3\ell$, then we have both $\mathrm{d}(x,E \cup \Gamma) \geq 2\ell$ and $\mathrm{d}(\phi(x),E \cup \Gamma) \geq 2\ell$ so (\ref{grille1}) shows that $x$ and $\phi(x)$ belongs to $\bigcup \set{\mathrm{int}(A) | A \in L}$. Then by (\ref{phi_image}), we have $\phi(x) \in \bigcup \set{A | A \in L,\ \mathrm{dim}(A) \leq d-1}$.

    We are all set to introduce the set
    \begin{equation}
        \beta_\infty = \R^n \setminus (E \cup \Gamma \cup \bigcup_{\abs{S} = d} \bigcap_S \overline{\beta_j}).
    \end{equation}
    First, we justify that the set $\beta_\infty$ is open. It suffices to show that the family $\left(\bigcap_S \overline{\beta_j}\right)_{\abs{S}=d}$ is locally finite in $\R^n$. In step 1, we have built the family $(\beta_j)_{j \in \N}$ in such a way that it is locally finite in $\R^n$ so for all $x \in \R^n$, there exists an open set $U$ containing $x$ such that
    \begin{equation}
        S_0 := \set{j \in \N | U \cap \beta_j \ne \emptyset}
    \end{equation}
    is finite. Let $S$ be any subset of $\N$ with cardinal $d$ such that $U$ meets $\bigcap_S \overline{\beta_j}$. In fact, $U$ meets also $\bigcap_S \beta_j$ because $U$ is open so $S \subset S_0$. We deduce that there exists only a finite number of sets $S \subset \N$ of cardinal $d$ such that $U$ meets $\bigcap_S \overline{\beta_j} \ne \emptyset$. We have proved that $\left(\bigcap_S \overline{\beta_j}\right)_{\abs{S}=d}$ is locally finite in $\R^n$. Next, we justify that $\beta_\infty$ does not add new $d$-simplexes. The definition of $\beta_\infty$ shows that for all $S \subset \N$ of cardinal $d$,
    \begin{equation}\label{gamma_simplex}
        \beta_\infty \cap \bigcap_S \beta_j = \emptyset
    \end{equation}
    so for all set $S \subset \N \cup \set{\infty}$ of cardinal $d+1$, the condition $\bigcap_S \beta_j \ne \emptyset$ implies $S \subset \N$. Finally, we would like to have the covering
    \begin{equation}\label{Ek_image2}
        \phi(E_k) \subset \beta_{\infty} \cup \bigcup_{j \in \N} \beta_j.
    \end{equation}
This is where the projection of $E_k$ in a $(d-1)$-dimensional grid outside $\bigcup_j \beta_j$ is useful. According to (\ref{Ek_image}), the condition (\ref{Ek_image2}) holds if
\begin{equation}
        \bigcup \set{A | A \in L,\ \mathrm{dim}(A) \leq d-1} \setminus \beta_\infty \subset \bigcup_j \beta_j.
\end{equation}
As $E \cup \Gamma \subset \bigcup_j \beta_j$, this amounts to say that for all $S \subset \N$ of cardinal $d$, 
    \begin{equation}
        \bigcup \set{A | A \in L,\ \mathrm{dim}(A) \leq d-1} \cap \bigcap_S \overline{\beta_j} \subset \bigcup_j \beta_j.
    \end{equation}
    We are going to see that for after a suitable translation of $K$, the set
    \begin{equation}
        \abs*{K^{d-1}} := \bigcup \set{A | A \in K,\ \mathrm{dim}(A) \leq d-1}
    \end{equation}
    is disjoint from $\bigcap_S \partial \beta_j$ for all $S \subset \N$ of cardinal $d$. Fix such a set $S$. The set $\bigcap_S \partial \beta_j$ is included in a finite union of $(n-d)$-spheres so for all $(d-1)$-linear plane $P$,
    \begin{equation}
        \LL^n \left(\bigcap_S \partial \beta_j + P\right) = 0
    \end{equation}
    and in particular,
    \begin{equation}
        \LL^n \left(\bigcap_S \partial \beta_j + (-\abs*{K^{d-1}}) \right) = 0.
    \end{equation}
    This means that for almost every $x \in \R^n$, $x + \abs*{K^{d-1}}$ is disjoint from $\bigcap_S \partial \beta_j$. There are only a countable number of sets $S \subset \N$ of cardinal $d$ so we can find $x$ such that this is true for all of them. We replace $K$ by $x + K$ so that (\ref{Ek_image2}) holds.

    \emph{Step 3. We build the general covering $\gamma$.} We consider the domain $V_\gamma = \N \cup \set{\infty}$ and for $j \in V_\gamma$, we consider the open set $\gamma_j = \phi^{-1}(\beta_j)$. Remember that $\phi = \mathrm{id}$ on $E \cup \Gamma$ so for all $j \in V_\gamma$,
    \begin{equation}
        (E \cup \Gamma) \cap \gamma_j = (E \cup \Gamma) \cap \beta_j
    \end{equation}
    and in particular, $(E \cup \Gamma) \cap \gamma_{\infty} = \emptyset$. By definition of $(\beta_j)_{j \in \N}$, the family $((E \cup \Gamma) \cap \gamma_j)_{V_\gamma}$ is a refinement of $\alpha$. The family $\gamma$ also covers $E_k \cup \Gamma$ because
    \begin{equation}
        \phi(E_k \cup \Gamma) \subset \phi(E_k) \cup \Gamma \subset \bigcup_{j \in V_\gamma} \beta_j.
    \end{equation}
    Finally, we show that for all $S \subset V_\gamma$ of cardinal $d+1$,
    \begin{equation}
        \bigcap_S \gamma_j \ne \emptyset \implies (E \cup \Gamma) \cap \bigcap_S \gamma_j \ne \emptyset.
    \end{equation}
    If $\bigcap_S \gamma_j \ne \emptyset$, then $\bigcap_S \beta_j \ne \emptyset$ and thus $S \subset \N$ because $\beta_\infty$ does not add new $d$-simplexes. By definition of $(\beta_j)_{j \in \N}$, this implies $(E \cup \Gamma) \cap \bigcap_S \beta_j \ne \emptyset$ or equivalently,
    \begin{equation}
        (E \cup \Gamma) \cap \bigcap_S \gamma_j \ne \emptyset
    \end{equation}
    because $\gamma_j$ coincides with $\beta_j$ on $E \cup \Gamma$.
\end{proof}

\section{Existence of Plateau solutions}
We solve two formulations of the Reifenberg Plateau problem. In the first one, we work in $X = \R^n$ and minimize $\II^d(E)$ among Reifenberg competitors $E$. In the second one, we work in $X = \R^n \setminus \Gamma$ (that is, away from the boundary) and minimize $\II^d(E \setminus \Gamma)$ among Reifenberg competitors $E$. In this case, we do not require regularity on the boundary.

\subsection{Direct method}
We are going to recall the direct method \cite[Corollary 4.1]{Labourie}. This is the same direct method as in \cite{I1} but this version allows any ambient space (even containing the boundary).

The ambient space is an open set $X$ of $\R^n$. See \cite[Definition 1.5]{Labourie} for the definition of a \emph{minimal set} in $X$. See \cite[Definition 1.8]{Labourie} for the definition of a \emph{Whitney subset} of $X$. It includes smooth compact manifolds imbedded in $X$.
\begin{prop}[Direct method]\label{cor_direct}
    Fix a Whitney subset $\Gamma$ of $X$. Fix an admissible energy $\II$ in $X$. Fix $\mathcal{C}$ a class of closed subsets of $X$ such that
    \begin{equation}
        m := \inf \set{\II(E) | E \in \mathcal{C}} < \infty
    \end{equation}
    and assume that for all $E \in \mathcal{C}$, for all sliding deformations $f$ of $E$ in $X$,
    \begin{equation}
        m \leq \II(f(E)).
    \end{equation}
    Let $(E_k)$ be a minimizing sequence for $\II$ in $\mathcal{C}$. Up to a subsequence, there exists a coral\footnote{A set $E \subset X$ is coral in $X$ if $E$ is the support of $\HH^d \mres E$ in $X$. Equivalently, $E$ is closed in $X$ and for all $x \in E$ and for all $r > 0$, $\HH^d(E \cap B(x,r)) > 0$.} minimal set $E_\infty$ with respect to $\II$ in $X$ such that
    \begin{equation}
        \II \mres E_k \rightharpoonup \II \mres E_\infty.
    \end{equation}
    where the arrow $\rightharpoonup$ denotes the weak convergence of Radon measures in $X$. In particular, $\II(E_\infty) \leq m$.
\end{prop}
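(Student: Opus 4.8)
The plan is to run the direct method exactly as in \cite{I1} and its generalization \cite{Labourie}: extract a weak limit of the energy measures, identify its support as the candidate minimizer, and then verify that this support is rectifiable, carries no more energy than the limit measure, and is itself minimal. Concretely, since $\II(E_k) \to m < \infty$, the Radon measures $\mu_k := \II \mres E_k$ have total masses $\mu_k(X) = \II(E_k)$ bounded independently of $k$. By weak-$*$ compactness of Radon measures I would pass to a subsequence so that $\mu_k \rightharpoonup \mu$ for some Radon measure $\mu$ in $X$, and set $E_\infty := \spt \mu$. Lower semicontinuity of mass on the open set $X$ already gives $\mu(X) \le \liminf_k \II(E_k) = m$, so the remaining work is to show that $E_\infty$ is a coral minimal set and that $\II \mres E_\infty \le \mu$ (which then yields $\II(E_\infty) \le \mu(X) \le m$).

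The observation that makes the weak-limit theory applicable is that the minimizing sequence is \emph{almost minimal}: for every sliding deformation $f$ of $E_k$ in $X$ the hypothesis gives $\II(f(E_k)) \ge m$, whence $\II(E_k) - \II(f(E_k)) \le \II(E_k) - m =: \eta_k$ with $\eta_k \to 0$. Thus $(E_k)$ is a quasiminimizing sequence in the sense of \cite{Labourie} with vanishing gauge. I would then invoke the rectifiability and lower-semicontinuity machinery of \cite{Labourie} (the analogue of \cite{I1}, \cite{I2}): the uniform two-sided bound $\IL^{-1}\HH^d \le \II \le \IL \HH^d$ together with the almost-minimality yields uniform upper and lower $d$-density estimates, forcing the limit $\mu$ to be $d$-rectifiable and $E_\infty = \spt \mu$ to be coral. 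The third axiom of admissible energy is the decisive ingredient at this stage: it is precisely what rules out the support of $\mu$ being strictly larger than the mass $\mu$ can account for, and it delivers the inequality $\II \mres E_\infty \le \mu$.

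Finally, to see that $E_\infty$ is minimal, I would take an arbitrary sliding deformation $g$ of $E_\infty$ in $X$ supported in a compact $K \subset\subset X$ and transplant it to the $E_k$: since the $E_k$ accumulate on $E_\infty$ in the weak sense, one builds competitors $f_k(E_k)$ that agree with $g$ near $E_\infty$, and uses the almost-minimality $\II(f_k(E_k)) \ge m$. Passing to the limit with the lower semicontinuity of the previous step on one side and the upper semicontinuity of $\II$ under the fixed deformation $g$ on the other yields $\II(E_\infty) \le \II(g(E_\infty))$, i.e. $E_\infty$ is minimal. The main obstacle is entirely contained in the middle paragraph, namely the rectifiability of the weak limit and the inequality $\II \mres E_\infty \le \mu$, which is the deep analytic content supplied by \cite{Labourie}; the compactness and the transplantation of deformations are comparatively routine. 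In practice this proposition is the specialization of \cite[Corollary 4.1]{Labourie} to the present setting, so the cleanest route is to check that its hypotheses (Whitney boundary, admissible energy, quasiminimizing sequence with vanishing gauge) hold here and cite it.
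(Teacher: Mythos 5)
Your proposal is correct and matches the paper's approach: the paper gives no internal proof of this proposition at all, but simply recalls it as \cite[Corollary 4.1]{Labourie}, which is exactly the route you settle on after checking that the hypotheses (Whitney boundary, admissible energy, and the quasiminimizing property $\II(E_k)-\II(f(E_k))\leq \II(E_k)-m\to 0$) are met. Your sketch of the underlying machinery is consistent with the paper's own commentary on the role of the third axiom; the only detail it elides is the reverse inequality $\mu\leq \II\mres E_\infty$ needed for the full conclusion $\II\mres E_k\rightharpoonup \II\mres E_\infty$ rather than just $\II(E_\infty)\leq m$, but that is supplied by the cited corollary.
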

    In the works of Reifenberg, the Hausdorff distance limit of a minimizing sequence is a competitor but the area is not lower semicontinuous. With weak limits, the lower semicontinuity follows from the previous proposition. Moreover, we also proved in the last section that the weak limit is a competitor. There is not much work to do.

\subsection{Applications}
\begin{thm}[with the free boundary]\label{rei_sol1}
    Fix a Whitney subset $\Gamma$ of $\R^n$ and fix a subgroup $L$ of $H_{d-1}(\Gamma)$. We assume that 
    \begin{equation}
        m := \inf \set{\II(E) | E \ \text{Reifenberg competitor}} < \infty
    \end{equation}
    and that there exists a compact set $C \subset \R^n$ such that
    \begin{equation}
        m = \inf \set{\II(E) | E \ \text{Reifenberg competitor},\ E \subset C}.
    \end{equation}
    Then there exists a Reifenberg competitor $E \subset C$ such that $\II(E) = m$.
\end{thm}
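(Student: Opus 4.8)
The plan is to run the direct method of Proposition \ref{cor_direct} on the class
\[
\mathcal{C} = \set{E | E \ \text{Reifenberg competitor},\ E \subset C},
\]
whose members are compact, hence closed in $X = \R^n$, and for which $\inf_{\mathcal{C}} \II = m < \infty$ by hypothesis. The only hypothesis of Proposition \ref{cor_direct} that is not immediate is the minimality condition: for every $E \in \mathcal{C}$ and every sliding deformation $f$ of $E$ in $\R^n$, one needs $m \leq \II(f(E))$.

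I would establish this by showing that $f(E)$ is again a Reifenberg competitor. Indeed, a sliding deformation comes with a homotopy $F\colon I \times E \to \R^n$ such that $F_0 = \mathrm{id}$, $F_1 = f$ and $F_t(E \cap \Gamma) \subset \Gamma$ for all $t$; restricting $F$ to $I \times (E \cap \Gamma)$ produces a homotopy into $\Gamma$ from the identity to $f$. Since $\Gamma$ is a Whitney subset, it is a neighborhood retract, so the second remark following Lemma \ref{rei_image} applies and $f(E)$ is a Reifenberg competitor. Crucially, $f(E)$ need not lie in $C$, so I cannot compare its energy to the restricted infimum; instead I invoke the \emph{unrestricted} infimum, which by hypothesis also equals $m$, to conclude $\II(f(E)) \geq m$. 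This is exactly the point where the equality of the two infima in the statement is used.

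With the minimality condition in hand, Proposition \ref{cor_direct} applied to any minimizing sequence $(E_k) \subset \mathcal{C}$ yields (up to a subsequence) a coral minimal set $E_\infty$ with $\II \mres E_k \rightharpoonup \II \mres E_\infty$ and $\II(E_\infty) \leq m$. Because every $E_k$ is contained in the compact set $C$, the limit measure is supported in $C$, so $E_\infty = \spt(\II \mres E_\infty) \subset C$; being closed and bounded, $E_\infty$ is compact. It remains to see that $E_\infty$ is a competitor, for which I would verify the hypotheses of Lemma \ref{rei_limit}. Hypothesis (1) holds since $E_k \subset C$. For hypothesis (2), fix an open set $V \supset E_\infty \cup \Gamma$ and set $W = C \setminus V$, a compact set disjoint from $E_\infty = \spt(\II \mres E_\infty)$; then $(\II \mres E_\infty)(W) = 0$, and the upper semicontinuity of weak-$*$ limits on compact sets gives $\limsup_k (\II \mres E_k)(W) \leq 0$. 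Since $E_k \subset C$ forces $E_k \setminus V = E_k \cap W$, and $\IL^{-1} \HH^d \leq \II$, this yields $\HH^d(E_k \setminus V) \leq \IL\, (\II \mres E_k)(W) \to 0$. Hence Lemma \ref{rei_limit} applies and $E_\infty$ is a Reifenberg competitor.

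Finally, $E_\infty \subset C$ is a Reifenberg competitor, so $\II(E_\infty) \geq m$ by definition of $m$; together with $\II(E_\infty) \leq m$ this gives $\II(E_\infty) = m$, and $E = E_\infty$ is the desired minimizer. The delicate points are structural rather than computational: the main one is the verification of the minimality condition for the direct method, where sliding deformations that escape $C$ compel the use of the unrestricted infimum; the second is extracting hypothesis (2) of Lemma \ref{rei_limit} from weak convergence, which relies on confining all competitors to the fixed compact set $C$ so that upper semicontinuity on compacts can be invoked.
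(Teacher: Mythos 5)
Your proof is correct and follows essentially the same route as the paper's: the direct method of Proposition \ref{cor_direct}, stability of competitors under sliding deformations via Lemma \ref{rei_image} (and its remarks), and Lemma \ref{rei_limit} applied to the weak limit, with hypothesis (2) extracted from upper semicontinuity on the compact set $C \setminus V$. The only difference is cosmetic: you feed the direct method the class restricted to $C$ and use the equality of the two infima to handle deformations escaping $C$, whereas the paper uses the unrestricted class and invokes that equality only to select a minimizing sequence inside $C$.
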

\begin{proof}
    We work in $X = \R^n$ and we consider the class
    \begin{equation}
        \mathcal{C} = \set{E | E \ \text{is a Reifenberg competitor}}.
    \end{equation}
    By Lemma \ref{rei_image}, the class $\mathcal{C}$ is closed under sliding deformations in $\R^n$ so it satisfies the requirement of Proposition \ref{cor_direct}. Let $(E_k) \subset C$ be a minimizing sequence of $\mathcal{C}$. According to Proposition \ref{cor_direct}, there exists a coral set $E_\infty$ of $\R^n$ such that
    \begin{equation}
        \II \mres E_k \rightharpoonup \II \mres E_\infty.
    \end{equation}
    We prove that $E_\infty$ is a Reifenberg competitor. First, we show that $E_\infty$ is a compact subset of $C$. Observe that $\R^n \setminus C$ is an open set and that by lower semicontinuity,
    \begin{equation}
        \II(E_\infty \setminus C) \leq \liminf_k \II(E_k \setminus C) = 0.
    \end{equation}
    This proves that the support of $\II \mres E_\infty$ is included in $C$. As $E_\infty$ is coral, $E_\infty$ is included in $C$ and in particular compact. We are going to apply Lemma \ref{rei_limit} to the set $E_\infty$. For all open set $V$ containing $E_\infty \cup \Gamma$,
    \begin{align}
        \limsup_k \II(E_k \setminus V)  &   = \limsup_k \II(E_k \cap C \setminus V)\\
                                        &   \leq \II(E_\infty \cap C \setminus V)\\
                                        &   \leq 0.
    \end{align}
    We conclude that $E_\infty$ is a Reifenberg competitor. Finally, we show that $\II(E_\infty) = m$. As $E_\infty$ is a Reifenberg competitor, we have of course $\II(E_\infty) \geq m$. The fact that $\II(E_\infty) \leq m$ has already been established in Proposition \ref{cor_direct}.
\end{proof}

The next theorem is equivalent to \cite[Theorem 1.3]{Fang} (which is based on Feuvrier's construction) and \cite[Theorem 3.4]{I5} (which is based on weak limits of minimizing sequences).
\begin{thm}[without the free boundary]\label{rei_sol2}
    Fix a closed set $\Gamma$ of $\R^n$ and a subgroup $L$ of $H_{d-1}(\Gamma)$. We assume that 
    \begin{equation}
        m := \inf \set{\II(E \setminus \Gamma) | E \ \text{Reifenberg competitor}} < \infty
    \end{equation}
    and that there exists a compact set $C \subset \R^n$ such that
    \begin{equation}
        m = \inf \set{\II(E \setminus \Gamma) | E \ \text{Reifenberg competitor},\ E \subset C}.
    \end{equation}
    Then there exists a Reifenberg competitor $E \subset C$ such that $\II(E \setminus \Gamma) = m$.
\end{thm}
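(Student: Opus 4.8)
\section*{Proof proposal}

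The plan is to run exactly the same direct-method argument as for Theorem \ref{rei_sol1}, but in the ambient space $X = \R^n \setminus \Gamma$ rather than in $\R^n$. The gain is conceptual: viewed inside $X$, the boundary has been removed, so the set relevant to the direct method is $\Gamma \cap X = \emptyset$, which is (trivially) a Whitney subset of $X$. Consequently sliding deformations in $X$ carry no boundary constraint; they are merely compactly supported inside $X$, hence supported away from $\Gamma$. This is also why no regularity on $\Gamma$ is needed. I would take the class
\[
    \mathcal{C} = \set{E \setminus \Gamma | E \ \text{is a Reifenberg competitor}},
\]
whose members $E \setminus \Gamma = E \cap X$ are relatively closed in $X$ (each $E$ is compact and $X$ is open). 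By the first hypothesis, $\inf\set{\II(E') | E' \in \mathcal{C}} = m < \infty$, and by the second hypothesis the minimizing sequence may be chosen as $(E_k \setminus \Gamma)$ with each $E_k$ a Reifenberg competitor contained in $C$. The restriction of $\II$ to the open set $X$ is again an admissible energy, since the three axioms are statements about balls $\overline B(x,r) \subset X$.

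The first key step is the minimality hypothesis of Proposition \ref{cor_direct}: for every Reifenberg competitor $E$ and every sliding deformation $\psi$ of $E \setminus \Gamma$ in $X$, one must have $m \leq \II(\psi(E \setminus \Gamma))$. Here I would exploit that $\psi$ equals the identity outside a compact subset $K \subset X$, so $K$ has positive distance to $\Gamma$. Therefore the gluing
\[
    h = \begin{cases} \psi & \text{on } E \setminus \Gamma,\\ \mathrm{id} & \text{on } E \cap \Gamma, \end{cases}
\]
is continuous on $E$ (the two pieces agree in a neighborhood of $E \cap \Gamma$, where $\psi = \mathrm{id}$) and fixes $E \cap \Gamma$. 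By the first Remark following Lemma \ref{rei_image}, $h(E)$ is a Reifenberg competitor. Since $\psi$ takes values in $X$, one has $h(E) \setminus \Gamma = \psi(E \setminus \Gamma)$, whence $\II(\psi(E \setminus \Gamma)) = \II(h(E) \setminus \Gamma) \geq m$, which is the required bound. Proposition \ref{cor_direct} then produces a coral minimal set $E_\infty$ in $X$ with $\II \mres (E_k \setminus \Gamma) \rightharpoonup \II \mres E_\infty$ and $\II(E_\infty) \leq m$.

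It remains to promote $E_\infty$ to a Reifenberg competitor. I would set $E := \overline{E_\infty}$ (closure in $\R^n$); since $E_\infty$ is closed in $X$, we have $E \setminus \Gamma = E \cap X = E_\infty$, so $\II(E \setminus \Gamma) = \II(E_\infty) \leq m$. A lower-semicontinuity argument on the open set $\R^n \setminus C$, verbatim as in the proof of Theorem \ref{rei_sol1}, gives $\II(E_\infty \setminus C) = 0$, and corality forces $E_\infty \subset C$; hence $E = \overline{E_\infty} \subset C$ is compact. To see that $E$ is a Reifenberg competitor I would invoke Lemma \ref{rei_limit} with the sequence $(E_k)$ and limit $E$. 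Condition (1) holds since $E_k \subset C$. For condition (2), fix an open $V \supset E \cup \Gamma$ and put $F := \R^n \setminus V$; then $F$ is closed, disjoint from $\Gamma$ (so $F \cap C$ is a compact subset of $X$) and disjoint from $E_\infty$. Upper semicontinuity of weak limits on the compact set $F \cap C$ gives
\[
    \limsup_k \II(E_k \cap F) = \limsup_k (\II \mres (E_k \setminus \Gamma))(F \cap C) \leq (\II \mres E_\infty)(F \cap C) = 0,
\]
and since $\HH^d \leq \IL \II$ and $E_k \setminus V = E_k \cap F$, we obtain $\HH^d(E_k \setminus V) \to 0$. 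Lemma \ref{rei_limit} then yields that $E$ is a Reifenberg competitor; being contained in $C$ it satisfies $\II(E \setminus \Gamma) \geq m$, and combined with the reverse inequality this gives $\II(E \setminus \Gamma) = m$.

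The main obstacle is the first key step: reconciling the ambient-space-$X$ notion of sliding deformation (unconstrained but compactly supported in $\R^n \setminus \Gamma$) with the homological Reifenberg condition. The decisive observation is that compact support inside $X$ forces support away from $\Gamma$, which is exactly what makes the extension-by-identity $h$ continuous and identity-preserving on $E \cap \Gamma$, so that the Remark after Lemma \ref{rei_image} applies and $\II(\psi(E \setminus \Gamma)) = \II(h(E) \setminus \Gamma)$.
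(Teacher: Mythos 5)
Your proposal is correct and follows essentially the same route as the paper: same ambient space $X = \R^n \setminus \Gamma$, same class $\mathcal{C}$, Proposition \ref{cor_direct} for the weak limit, and Lemma \ref{rei_limit} to recover a Reifenberg competitor. The only differences are cosmetic — you take $E = \overline{E_\infty}$ where the paper takes $(S_\infty \cup \Gamma) \cap C$ (both yield the same sets $E \cup \Gamma$, so the verification of Lemma \ref{rei_limit} is identical), and you spell out, via the gluing $h$ and the first Remark after Lemma \ref{rei_image}, the deformation-stability of $\mathcal{C}$ that the paper dispatches with the parenthetical ``the boundary is empty in $X$''.
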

\begin{rmk}
    If $\Gamma$ is compact and $\II(\Gamma) < \infty$, this amounts to minimizing $\II(E)$ among Reifenberg competitors containing $\Gamma$.
\end{rmk}
\begin{proof}
    We work in $X = \R^n \setminus \Gamma$ (away from the boundary) and we consider the class
    \begin{equation}
        \mathcal{C} = \set{E \setminus \Gamma | E \ \text{is a Reifenberg competitor}}.
    \end{equation}
    By Lemma \ref{rei_image}, the class $\mathcal{C}$ is closed under sliding deformations in $X$ (the boundary is empty in $X$) so it satisfies the requirement of Proposition \ref{cor_direct}. Let $(E_k) \subset C$ be a sequence of Reifenberg competitor such that $(E_k \setminus \Gamma)$ is a minimizing sequence of $\mathcal{C}$. According to Proposition \ref{cor_direct}, there exists a coral set $S_\infty$ of $X$ such that
    \begin{equation}
        \II \mres (E_k \setminus \Gamma) \rightharpoonup \II \mres S_\infty \ \text{in $X$}.
    \end{equation}
    We prove that there exists a Reifenberg competitor $E_\infty \subset C$ such that $S_\infty = E_\infty \setminus \Gamma$. First, we justify that $S_\infty \subset C$. Observe that $X \setminus C$ is an open set of $X$ and that by lower semicontinuity,
    \begin{equation}
        \II(S_\infty \setminus C) \leq \liminf_k \II((E_k \setminus \Gamma) \setminus C) = 0.
    \end{equation}
    As a consequence, the support of $\II \mres S_\infty$ in $X$ is included in $C$. As $S_\infty$ is coral in $X$, $S_\infty$ is included in $C$. Now, let
    \begin{equation}
        E_\infty = (S_\infty \cup \Gamma) \cap C.
    \end{equation}
    The set $S_\infty$ is closed in $X$ so $S_\infty \cup \Gamma$ is closed in $\R^n$ so $E_\infty$ is a compact subset of $C$. Note also that $E_\infty \setminus \Gamma = S_\infty$. We are going to apply Lemma \ref{rei_limit} to the set $E_\infty$. For all open set $V$ containing $E_\infty \cup \Gamma$,
    \begin{align}
        \limsup_k \II(E_k \setminus V)  &   = \limsup_k \II(E_k \cap C \setminus V)\\
                                        &   = \limsup_k \II((E_k \setminus \Gamma) \cap C \setminus V)\\
                                        &   \leq \II(E_\infty \cap C \setminus V)\\
                                        &   \leq 0.
    \end{align}
    Thus, $E_\infty$ is a Reifenberg competitor and $S_\infty = E_\infty \setminus \Gamma \in \mathcal{C}$. Finally, we show that $\II(S_\infty) = m$. As $S_\infty \in \mathcal{C}$, we have of course $\II(S_\infty) \geq m$. The fact that $\II(S_\infty) \leq m$ has already been established in Proposition \ref{cor_direct}.
\end{proof}

\textbf{Acknowledgement:} I would like to thank Guy David for his warm and helpful discussions. I thank the anonymous reviewer which has improved this paper.

\textsc{Université Paris-Saclay, CNRS, Laboratoire de mathématiques d'Orsay, 91405, Orsay, France.}
\end{document}